\newtheorem{theorem}{Theorem}
\theoremstyle{plain}
\newtheorem{axiom}{Axiom}
\newtheorem{conjecture}{Conjecture}
\newtheorem{corollary}{Corollary}
\newtheorem{definition}{Definition}
\newtheorem{example}{Example}
\newtheorem{exercise}{Exercise}
\newtheorem{lemma}{Lemma}
\newtheorem{proposition}{Proposition}
\newtheorem{remark}{Remark}
\numberwithin{equation}{section}
\numberwithin{theorem}{section}
\numberwithin{algorithm}{section}
\numberwithin{axiom}{section}
\numberwithin{case}{section}
\numberwithin{claim}{section}
\numberwithin{conclusion}{section}
\numberwithin{condition}{section}
\numberwithin{conjecture}{section}
\numberwithin{corollary}{section}
\numberwithin{criterion}{section}
\numberwithin{definition}{section}
\numberwithin{example}{section}
\numberwithin{exercise}{section}
\numberwithin{lemma}{section}
\numberwithin{notation}{section}
\numberwithin{problem}{section}
\numberwithin{proposition}{section}
\numberwithin{remark}{section}
\numberwithin{solution}{section}
\chardef\@x10\chardef\@xv60
\def\tcitime{
\def\@time{%
  \@minute\time\@hour\@minute\divide\@hour\@xv
  \ifnum\@hour<\@x 0\fi\the\@hour:%
  \multiply\@hour\@xv\advance\@minute-\@hour
  \ifnum\@minute<\@x 0\fi\the\@minute
  }}%
\def\QCTOpt[#1]#2{%
  \def\QCTOptB{#1}
  \def\QCTOptA{#2}
}
\def\QCTNOpt#1{%
  \def\QCTOptA{#1}
  \let\QCTOptB\empty
}
\def\Qct{%
  \@ifnextchar[{%
    \QCTOpt}{\QCTNOpt}
}
\def\QCBOpt[#1]#2{%
  \def\QCBOptB{#1}
  \def\QCBOptA{#2}
}
\def\QCBNOpt#1{%
  \def\QCBOptA{#1}
  \let\QCBOptB\empty
}
\def\Qcb{%
  \@ifnextchar[{%
    \QCBOpt}{\QCBNOpt}
}
\def\PrepCapArgs{%
  \ifx\QCBOptA\empty
    \ifx\QCTOptA\empty
      {}%
    \else
      \ifx\QCTOptB\empty
        {\QCTOptA}%
      \else
        [\QCTOptB]{\QCTOptA}%
      \fi
    \fi
  \else
    \ifx\QCBOptA\empty
      {}%
    \else
      \ifx\QCBOptB\empty
        {\QCBOptA}%
      \else
        [\QCBOptB]{\QCBOptA}%
      \fi
    \fi
  \fi
}
\def\GRAPHICSPS#1{%
 \ifcase\GRAPHICSTYPE
   \special{ps: #1}%
 \or
   \special{language "PS", include "#1"}%
 \fi
}%
\def\graffile#1#2#3#4{%
    \bgroup
    \leavevmode
    \@ifundefined{bbl@deactivate}{\def~{\string~}}{\activesoff}
    \raise -#4 \BOXTHEFRAME{%
        \hbox to #2{\raise #3\hbox to #2{\null #1\hfil}}}%
    \egroup
}%
\def\draftbox#1#2#3#4{%
 \leavevmode\raise -#4 \hbox{%
  \frame{\rlap{\protect\tiny #1}\hbox to #2%
   {\vrule height#3 width\z@ depth\z@\hfil}%
  }%
 }%
}%
\newif\ifwasdraft
\def\GRAPHIC#1#2#3#4#5{%
 \ifnum\draft=\@ne\draftbox{#2}{#3}{#4}{#5}%
  \else\graffile{#1}{#3}{#4}{#5}%
  \fi
 }%
\def\addtoLaTeXparams#1{%
    \edef\LaTeXparams{\LaTeXparams #1}}%
\newif\ifBoxFrame \BoxFramefalse
\newif\ifOverFrame \OverFramefalse
\newif\ifUnderFrame \UnderFramefalse
\def\BOXTHEFRAME#1{%
   \hbox{%
      \ifBoxFrame
         \frame{#1}%
      \else
         {#1}%
      \fi
   }%
}
\def\doFRAMEparams#1{\BoxFramefalse\OverFramefalse\UnderFramefalse\readFRAMEparams#1\end}%
\def\readFRAMEparams#1{%
 \ifx#1\end%
  \let\next=\relax
  \else
  \ifx#1i\dispkind=\z@\fi
  \ifx#1d\dispkind=\@ne\fi
  \ifx#1f\dispkind=\tw@\fi
  \ifx#1t\addtoLaTeXparams{t}\fi
  \ifx#1b\addtoLaTeXparams{b}\fi
  \ifx#1p\addtoLaTeXparams{p}\fi
  \ifx#1h\addtoLaTeXparams{h}\fi
  \ifx#1X\BoxFrametrue\fi
  \ifx#1O\OverFrametrue\fi
  \ifx#1U\UnderFrametrue\fi
  \ifx#1w
    \ifnum\draft=1\wasdrafttrue\else\wasdraftfalse\fi
    \draft=\@ne
  \fi
  \let\next=\readFRAMEparams
  \fi
 \next
 }%
\def\IFRAME#1#2#3#4#5#6{%
      \bgroup
      \let\QCTOptA\empty
      \let\QCTOptB\empty
      \let\QCBOptA\empty
      \let\QCBOptB\empty
      #6%
      \parindent=0pt%
      \leftskip=0pt
      \rightskip=0pt
      \setbox0 = \hbox{\QCBOptA}%
      \@tempdima = #1\relax
      \ifOverFrame
          \typeout{This is not implemented yet}%
          \show\HELP
      \else
         \ifdim\wd0>\@tempdima
            \advance\@tempdima by \@tempdima
            \ifdim\wd0 >\@tempdima
               \textwidth=\@tempdima
               \setbox1 =\vbox{%
                  \noindent\hbox to \@tempdima{\hfill\GRAPHIC{#5}{#4}{#1}{#2}{#3}\hfill}\\%
                  \noindent\hbox to \@tempdima{\parbox[b]{\@tempdima}{\QCBOptA}}%
               }%
               \wd1=\@tempdima
            \else
               \textwidth=\wd0
               \setbox1 =\vbox{%
                 \noindent\hbox to \wd0{\hfill\GRAPHIC{#5}{#4}{#1}{#2}{#3}\hfill}\\%
                 \noindent\hbox{\QCBOptA}%
               }%
               \wd1=\wd0
            \fi
         \else
            \ifdim\wd0>0pt
              \hsize=\@tempdima
              \setbox1 =\vbox{%
                \unskip\GRAPHIC{#5}{#4}{#1}{#2}{0pt}%
                \break
                \unskip\hbox to \@tempdima{\hfill \QCBOptA\hfill}%
              }%
              \wd1=\@tempdima
           \else
              \hsize=\@tempdima
              \setbox1 =\vbox{%
                \unskip\GRAPHIC{#5}{#4}{#1}{#2}{0pt}%
              }%
              \wd1=\@tempdima
           \fi
         \fi
         \@tempdimb=\ht1
         \advance\@tempdimb by \dp1
         \advance\@tempdimb by -#2%
         \advance\@tempdimb by #3%
         \leavevmode
         \raise -\@tempdimb \hbox{\box1}%
      \fi
      \egroup%
}%
\def\DFRAME#1#2#3#4#5{%
 \begin{center}
     \let\QCTOptA\empty
     \let\QCTOptB\empty
     \let\QCBOptA\empty
     \let\QCBOptB\empty
     \ifOverFrame 
        #5\QCTOptA\par
     \fi
     \GRAPHIC{#4}{#3}{#1}{#2}{\z@}
     \ifUnderFrame 
        \nobreak\par\nobreak#5\QCBOptA
     \fi
 \end{center}%
 }%
\def\FFRAME#1#2#3#4#5#6#7{%
 \begin{figure}[#1]%
  \let\QCTOptA\empty
  \let\QCTOptB\empty
  \let\QCBOptA\empty
  \let\QCBOptB\empty
  \ifOverFrame
    #4
    \ifx\QCTOptA\empty
    \else
      \ifx\QCTOptB\empty
        \caption{\QCTOptA}%
      \else
        \caption[\QCTOptB]{\QCTOptA}%
      \fi
    \fi
    \ifUnderFrame\else
      \label{#5}%
    \fi
  \else
    \UnderFrametrue%
  \fi
  \begin{center}\GRAPHIC{#7}{#6}{#2}{#3}{\z@}\end{center}%
  \ifUnderFrame
    #4
    \ifx\QCBOptA\empty
      \caption{}%
    \else
      \ifx\QCBOptB\empty
        \caption{\QCBOptA}%
      \else
        \caption[\QCBOptB]{\QCBOptA}%
      \fi
    \fi
    \label{#5}%
  \fi
  \end{figure}%
 }%
\def\makeactives{
  \catcode`\"=\active
  \catcode`\;=\active
  \catcode`\:=\active
  \catcode`\'=\active
  \catcode`\~=\active
}
   \gdef\activesoff{%
      \def"{\string"}
      \def;{\string;}
      \def:{\string:}
      \def'{\string'}
      \def~{\string~}
    }
\def\FRAME#1#2#3#4#5#6#7#8{%
 \bgroup
 \ifnum\draft=\@ne
   \wasdrafttrue
 \else
   \wasdraftfalse%
 \fi
 \def\LaTeXparams{}%
 \dispkind=\z@
 \def\LaTeXparams{}%
 \doFRAMEparams{#1}%
 \ifnum\dispkind=\z@\IFRAME{#2}{#3}{#4}{#7}{#8}{#5}\else
  \ifnum\dispkind=\@ne\DFRAME{#2}{#3}{#7}{#8}{#5}\else
   \ifnum\dispkind=\tw@
    \edef\@tempa{\noexpand\FFRAME{\LaTeXparams}}%
    \@tempa{#2}{#3}{#5}{#6}{#7}{#8}%
    \fi
   \fi
  \fi
  \ifwasdraft\draft=1\else\draft=0\fi{}%
  \egroup
 }%
\def\TEXUX#1{"texux"}
\def\func#1{\mathop{\rm #1}\nolimits}%
\long\def\QQQ#1#2{%
     \long\expandafter\def\csname#1\endcsname{#2}}%
\long\def\QQA#1#2{}%
\def\QTR#1#2{{\csname#1\endcsname #2}}
\def\EXPAND#1[#2]#3{}%
\def\NOEXPAND#1[#2]#3{}%
\def\LaTeXparent#1{}%
\def\ChildStyles#1{}%
\def\ChildDefaults#1{}%
\def\QTagDef#1#2#3{}%
  \providecommand{\UNICODE}[2][]{}
\def\QQfnmark#1{\footnotemark}
 \def\abstract{%
  \if@twocolumn
   \section*{Abstract (Not appropriate in this style!)}%
   \else \small 
   \begin{center}{\bf Abstract\vspace{-.5em}\vspace{\z@}}\end{center}%
   \quotation 
   \fi
  }%
   \def\registered{\relax\ifmmode{}\r@gistered
                    \else$\m@th\r@gistered$\fi}%
 \def\r@gistered{^{\ooalign
  {\hfil\raise.07ex\hbox{$\scriptstyle\rm\text{R}$}\hfil\crcr
  \mathhexbox20D}}}}{}%
\newdimen\theight
\def\Column{%
 \vadjust{\setbox\z@=\hbox{\scriptsize\quad\quad tcol}%
  \theight=\ht\z@\advance\theight by \dp\z@\advance\theight by \lineskip
  \kern -\theight \vbox to \theight{%
   \rightline{\rlap{\box\z@}}%
   \vss
   }%
  }%
 }%
\def\qed{%
 \ifhmode\unskip\nobreak\fi\ifmmode\ifinner\else\hskip5\p@\fi\fi
 \hbox{\hskip5\p@\vrule width4\p@ height6\p@ depth1.5\p@\hskip\p@}%
 }%
\def\miss{\hbox{\vrule height2\p@ width 2\p@ depth\z@}}%
\def\tcol#1{{\baselineskip=6\p@ \vcenter{#1}} \Column}  %
\def\newfmtname{LaTeX2e}
  \DeclareOldFontCommand{\rm}{\normalfont\rmfamily}{\mathrm}
  \DeclareOldFontCommand{\sf}{\normalfont\sffamily}{\mathsf}
  \DeclareOldFontCommand{\tt}{\normalfont\ttfamily}{\mathtt}
  \DeclareOldFontCommand{\bf}{\normalfont\bfseries}{\mathbf}
  \DeclareOldFontCommand{\it}{\normalfont\itshape}{\mathit}
  \DeclareOldFontCommand{\sl}{\normalfont\slshape}{\@nomath\sl}
  \DeclareOldFontCommand{\sc}{\normalfont\scshape}{\@nomath\sc}
\def\alpha{{\Greekmath 010B}}%
\def\beta{{\Greekmath 010C}}%
\def\gamma{{\Greekmath 010D}}%
\def\delta{{\Greekmath 010E}}%
\def\epsilon{{\Greekmath 010F}}%
\def\zeta{{\Greekmath 0110}}%
\def\eta{{\Greekmath 0111}}%
\def\theta{{\Greekmath 0112}}%
\def\iota{{\Greekmath 0113}}%
\def\kappa{{\Greekmath 0114}}%
\def\lambda{{\Greekmath 0115}}%
\def\mu{{\Greekmath 0116}}%
\def\nu{{\Greekmath 0117}}%
\def\xi{{\Greekmath 0118}}%
\def\pi{{\Greekmath 0119}}%
\def\rho{{\Greekmath 011A}}%
\def\sigma{{\Greekmath 011B}}%
\def\tau{{\Greekmath 011C}}%
\def\upsilon{{\Greekmath 011D}}%
\def\phi{{\Greekmath 011E}}%
\def\chi{{\Greekmath 011F}}%
\def\psi{{\Greekmath 0120}}%
\def\omega{{\Greekmath 0121}}%
\def\varepsilon{{\Greekmath 0122}}%
\def\vartheta{{\Greekmath 0123}}%
\def\varpi{{\Greekmath 0124}}%
\def\varrho{{\Greekmath 0125}}%
\def\varsigma{{\Greekmath 0126}}%
\def\varphi{{\Greekmath 0127}}%
\def\nabla{{\Greekmath 0272}}
\def\FindBoldGroup{%
   {\setbox0=\hbox{$\mathbf{x\global\edef\theboldgroup{\the\mathgroup}}$}}%
}
\def\Greekmath#1#2#3#4{%
    \if@compatibility
        \ifnum\mathgroup=\symbold
           \mathchoice{\mbox{\boldmath$\displaystyle\mathchar"#1#2#3#4$}}%
                      {\mbox{\boldmath$\textstyle\mathchar"#1#2#3#4$}}%
                      {\mbox{\boldmath$\scriptstyle\mathchar"#1#2#3#4$}}%
                      {\mbox{\boldmath$\scriptscriptstyle\mathchar"#1#2#3#4$}}%
        \else
           \mathchar"#1#2#3#4%
        \fi 
    \else 
        \FindBoldGroup
        \ifnum\mathgroup=\theboldgroup 
           \mathchoice{\mbox{\boldmath$\displaystyle\mathchar"#1#2#3#4$}}%
                      {\mbox{\boldmath$\textstyle\mathchar"#1#2#3#4$}}%
                      {\mbox{\boldmath$\scriptstyle\mathchar"#1#2#3#4$}}%
                      {\mbox{\boldmath$\scriptscriptstyle\mathchar"#1#2#3#4$}}%
        \else
           \mathchar"#1#2#3#4%
        \fi     	    
	  \fi}
\newif\ifGreekBold  \GreekBoldfalse
\let\SAVEPBF=\pbf
\def\pbf{\GreekBoldtrue\SAVEPBF}%
  \newcounter{equationnumber}  
  \def\mathletters{%
     \addtocounter{equation}{1}
     \edef\@currentlabel{\theequation}%
     \setcounter{equationnumber}{\c@equation}
     \setcounter{equation}{0}%
     \edef\theequation{\@currentlabel\noexpand\alph{equation}}%
  }
    \def\BibTeX{{\rm B\kern-.05em{\sc i\kern-.025em b}\kern-.08em
                 T\kern-.1667em\lower.7ex\hbox{E}\kern-.125emX}}}{}%
\def\AmS{{\protect\usefont{OMS}{cmsy}{m}{n}%
                A\kern-.1667em\lower.5ex\hbox{M}\kern-.125emS}}}{}%
\def\@@eqncr{\let\@tempa\relax
    \ifcase\@eqcnt \def\@tempa{& & &}\or \def\@tempa{& &}%
      \else \def\@tempa{&}\fi
     \@tempa
     \if@eqnsw
        \iftag@
           \@taggnum
        \else
           \@eqnnum\stepcounter{equation}%
        \fi
     \fi
     \global\tag@false
     \global\@eqnswtrue
     \global\@eqcnt\z@\cr}
\def\TCItag{\@ifnextchar*{\@TCItagstar}{\@TCItag}}
\def\@TCItag#1{%
    \global\tag@true
    \global\def\@taggnum{(#1)}}
\def\@TCItagstar*#1{%
    \global\tag@true
    \global\def\@taggnum{#1}}
\let\DOTSI\relax
\def\RIfM@{\relax\ifmmode}%
\def\FN@{\futurelet\next}%
\def\iint{\DOTSI\intno@\tw@\FN@\ints@}%
\def\iiint{\DOTSI\intno@\thr@@\FN@\ints@}%
\def\iiiint{\DOTSI\intno@4 \FN@\ints@}%
\def\idotsint{\DOTSI\intno@\z@\FN@\ints@}%
\def\ints@{\findlimits@\ints@@}%
\newif\iflimtoken@
\newif\iflimits@
\def\findlimits@{\limtoken@true\ifx\next\limits\limits@true
 \else\ifx\next\nolimits\limits@false\else
 \limtoken@false\ifx\ilimits@\nolimits\limits@false\else
 \ifinner\limits@false\else\limits@true\fi\fi\fi\fi}%
\def\multint@{\int\ifnum\intno@=\z@\intdots@                          
 \else\intkern@\fi                                                    
 \ifnum\intno@>\tw@\int\intkern@\fi                                   
 \ifnum\intno@>\thr@@\int\intkern@\fi                                 
 \int}
\def\multintlimits@{\intop\ifnum\intno@=\z@\intdots@\else\intkern@\fi
 \ifnum\intno@>\tw@\intop\intkern@\fi
 \ifnum\intno@>\thr@@\intop\intkern@\fi\intop}%
\def\intic@{%
    \mathchoice{\hskip.5em}{\hskip.4em}{\hskip.4em}{\hskip.4em}}%
\def\negintic@{\mathchoice
 {\hskip-.5em}{\hskip-.4em}{\hskip-.4em}{\hskip-.4em}}%
\def\ints@@{\iflimtoken@                                              
 \def\ints@@@{\iflimits@\negintic@
   \mathop{\intic@\multintlimits@}\limits                             
  \else\multint@\nolimits\fi                                          
  \eat@}
 \else                                                                
 \def\ints@@@{\iflimits@\negintic@
  \mathop{\intic@\multintlimits@}\limits\else
  \multint@\nolimits\fi}\fi\ints@@@}%
\def\intkern@{\mathchoice{\!\!\!}{\!\!}{\!\!}{\!\!}}%
\def\plaincdots@{\mathinner{\cdotp\cdotp\cdotp}}%
\def\intdots@{\mathchoice{\plaincdots@}%
 {{\cdotp}\mkern1.5mu{\cdotp}\mkern1.5mu{\cdotp}}%
 {{\cdotp}\mkern1mu{\cdotp}\mkern1mu{\cdotp}}%
 {{\cdotp}\mkern1mu{\cdotp}\mkern1mu{\cdotp}}}%
\def\RIfM@{\relax\protect\ifmmode}
\def\text{\RIfM@\expandafter\text@\else\expandafter\mbox\fi}
\let\nfss@text\text
\def\text@#1{\mathchoice
   {\textdef@\displaystyle\f@size{#1}}%
   {\textdef@\textstyle\tf@size{\firstchoice@false #1}}%
   {\textdef@\textstyle\sf@size{\firstchoice@false #1}}%
   {\textdef@\textstyle \ssf@size{\firstchoice@false #1}}%
   \glb@settings}
\def\textdef@#1#2#3{\hbox{{%
                    \everymath{#1}%
                    \let\f@size#2\selectfont
                    #3}}}
\newif\iffirstchoice@
\def\Let@{\relax\iffalse{\fi\let\\=\cr\iffalse}\fi}%
\def\vspace@{\def\vspace##1{\crcr\noalign{\vskip##1\relax}}}%
\def\multilimits@{\bgroup\vspace@\Let@
 \baselineskip\fontdimen10 \scriptfont\tw@
 \advance\baselineskip\fontdimen12 \scriptfont\tw@
 \lineskip\thr@@\fontdimen8 \scriptfont\thr@@
 \lineskiplimit\lineskip
 \vbox\bgroup\ialign\bgroup\hfil$\m@th\scriptstyle{##}$\hfil\crcr}%
\def\Sb{_\multilimits@}%
\def\endSb{\crcr\egroup\egroup\egroup}%
\def\Sp{^\multilimits@}%
\newdimen\ex@
\def\rightarrowfill@#1{$#1\m@th\mathord-\mkern-6mu\cleaders
 \hbox{$#1\mkern-2mu\mathord-\mkern-2mu$}\hfill
 \mkern-6mu\mathord\rightarrow$}%
\def\leftarrowfill@#1{$#1\m@th\mathord\leftarrow\mkern-6mu\cleaders
 \hbox{$#1\mkern-2mu\mathord-\mkern-2mu$}\hfill\mkern-6mu\mathord-$}%
\def\leftrightarrowfill@#1{$#1\m@th\mathord\leftarrow
\mkern-6mu\cleaders
 \hbox{$#1\mkern-2mu\mathord-\mkern-2mu$}\hfill
 \mkern-6mu\mathord\rightarrow$}%
\def\overrightarrow{\mathpalette\overrightarrow@}%
\def\overrightarrow@#1#2{\vbox{\ialign{##\crcr\rightarrowfill@#1\crcr
 \noalign{\kern-\ex@\nointerlineskip}$\m@th\hfil#1#2\hfil$\crcr}}}%
\def\overleftarrow{\mathpalette\overleftarrow@}%
\def\overleftarrow@#1#2{\vbox{\ialign{##\crcr\leftarrowfill@#1\crcr
 \noalign{\kern-\ex@\nointerlineskip}$\m@th\hfil#1#2\hfil$\crcr}}}%
\def\overleftrightarrow{\mathpalette\overleftrightarrow@}%
\def\overleftrightarrow@#1#2{\vbox{\ialign{##\crcr
   \leftrightarrowfill@#1\crcr
 \noalign{\kern-\ex@\nointerlineskip}$\m@th\hfil#1#2\hfil$\crcr}}}%
\def\underrightarrow{\mathpalette\underrightarrow@}%
\def\underrightarrow@#1#2{\vtop{\ialign{##\crcr$\m@th\hfil#1#2\hfil
  $\crcr\noalign{\nointerlineskip}\rightarrowfill@#1\crcr}}}%
\def\underleftarrow{\mathpalette\underleftarrow@}%
\def\underleftarrow@#1#2{\vtop{\ialign{##\crcr$\m@th\hfil#1#2\hfil
  $\crcr\noalign{\nointerlineskip}\leftarrowfill@#1\crcr}}}%
\def\underleftrightarrow{\mathpalette\underleftrightarrow@}%
\def\underleftrightarrow@#1#2{\vtop{\ialign{##\crcr$\m@th
  \hfil#1#2\hfil$\crcr
 \noalign{\nointerlineskip}\leftrightarrowfill@#1\crcr}}}%
\def\qopnamewl@#1{\mathop{\operator@font#1}\nlimits@}
\let\nlimits@\displaylimits
\def\setboxz@h{\setbox\z@\hbox}
\def\varlim@#1#2{\mathop{\vtop{\ialign{##\crcr
 \hfil$#1\m@th\operator@font lim$\hfil\crcr
 \noalign{\nointerlineskip}#2#1\crcr
 \noalign{\nointerlineskip\kern-\ex@}\crcr}}}}
 \def\rightarrowfill@#1{\m@th\setboxz@h{$#1-$}\ht\z@\z@
  $#1\copy\z@\mkern-6mu\cleaders
  \hbox{$#1\mkern-2mu\box\z@\mkern-2mu$}\hfill
  \mkern-6mu\mathord\rightarrow$}
\def\leftarrowfill@#1{\m@th\setboxz@h{$#1-$}\ht\z@\z@
  $#1\mathord\leftarrow\mkern-6mu\cleaders
  \hbox{$#1\mkern-2mu\copy\z@\mkern-2mu$}\hfill
  \mkern-6mu\box\z@$}
\def\projlim{\qopnamewl@{proj\,lim}}
\def\injlim{\qopnamewl@{inj\,lim}}
\def\varinjlim{\mathpalette\varlim@\rightarrowfill@}
\def\varprojlim{\mathpalette\varlim@\leftarrowfill@}
\def\varliminf{\mathpalette\varliminf@{}}
\def\varliminf@#1{\mathop{\underline{\vrule\@depth.2\ex@\@width\z@
   \hbox{$#1\m@th\operator@font lim$}}}}
\def\varlimsup{\mathpalette\varlimsup@{}}
\def\varlimsup@#1{\mathop{\overline
  {\hbox{$#1\m@th\operator@font lim$}}}}
\def\align{\@verbatim \frenchspacing\@vobeyspaces \@alignverbatim
You are using the "align" environment in a style in which it is not defined.}
\let\csname endalign*\endcsname =\endtrivlist
\def\alignat{\@verbatim \frenchspacing\@vobeyspaces \@alignatverbatim
You are using the "alignat" environment in a style in which it is not defined.}
\let\csname endalignat*\endcsname =\endtrivlist
\def\xalignat{\@verbatim \frenchspacing\@vobeyspaces \@xalignatverbatim
You are using the "xalignat" environment in a style in which it is not defined.}
\let\csname endxalignat*\endcsname =\endtrivlist
\def\gather{\@verbatim \frenchspacing\@vobeyspaces \@gatherverbatim
You are using the "gather" environment in a style in which it is not defined.}
\let\csname endgather*\endcsname =\endtrivlist
\def\multiline{\@verbatim \frenchspacing\@vobeyspaces \@multilineverbatim
You are using the "multiline" environment in a style in which it is not defined.}
\let\csname endmultiline*\endcsname =\endtrivlist
\def\arrax{\@verbatim \frenchspacing\@vobeyspaces \@arraxverbatim
You are using a type of "array" construct that is only allowed in AmS-LaTeX.}
\def\tabulax{\@verbatim \frenchspacing\@vobeyspaces \@tabulaxverbatim
You are using a type of "tabular" construct that is only allowed in AmS-LaTeX.}
\let\csname endarrax*\endcsname =\endtrivlist
\let\csname endtabulax*\endcsname =\endtrivlist
 \def\endequation{%
     \ifmmode\ifinner 
      \iftag@
        \addtocounter{equation}{-1} 
        $\hfil
           \displaywidth\linewidth\@taggnum\egroup \endtrivlist
        \global\tag@false
        \global\@ignoretrue   
      \else
        $\hfil
           \displaywidth\linewidth\@eqnnum\egroup \endtrivlist
        \global\tag@false
        \global\@ignoretrue 
      \fi
     \else   
      \iftag@
        \addtocounter{equation}{-1} 
        \eqno \hbox{\@taggnum}
        \global\tag@false%
        $$\global\@ignoretrue
      \else
        \eqno \hbox{\@eqnnum}
        $$\global\@ignoretrue
      \fi
     \fi\fi
 } 
 \newif\iftag@ \tag@false
 \def\TCItag{\@ifnextchar*{\@TCItagstar}{\@TCItag}}
 \def\@TCItag#1{%
     \global\tag@true
     \global\def\@taggnum{(#1)}}
 \def\@TCItagstar*#1{%
     \global\tag@true
     \global\def\@taggnum{#1}}
     \def\tag{\@ifnextchar*{\@tagstar}{\@tag}}
     \def\@tag#1{%
         \global\tag@true
         \global\def\@taggnum{(#1)}}
     \def\@tagstar*#1{%
         \global\tag@true
         \global\def\@taggnum{#1}}
\begin{document}
\title[Sign of Green's function and the $Q$ curvature]{Sign of Green's
function of Paneitz operators and the $Q$ curvature}
\author{Fengbo Hang}
\address{Courant Institute, New York University, 251 Mercer Street, New York
NY 10012}
\email{fengbo@cims.nyu.edu}
\author{Paul C. Yang}
\address{Department of Mathematics, Princeton University, Fine Hall,
Washington Road, Princeton NJ 08544}
\email{yang@math.princeton.edu}
\date{}
\subjclass[2010]{58J05, 53C21}

\begin{abstract}
In a conformal class of metrics with positive Yamabe invariant, we derive a
necessary and sufficient condition for the existence of metrics with
positive $Q$ curvature. The condition is conformally invariant. We also
prove some inequalities between the Green's functions of the conformal
Laplacian operator and the Paneitz operator.
\end{abstract}

\maketitle

\section{Introduction\label{sec1}}

Since the fundamental work \cite{CGY} in dimension $4$, the Paneitz operator
and associated $Q$ curvature in dimension other than $4$ (see \cite{B,P})
attracts much attention (see \cite{DHL,GM,HY1,HeR1,HeR2,HuR,QR} etc and the
references therein). Let $\left( M,g\right) $ be a smooth compact $n$
dimensional Riemannian manifold. For $n\geq 3$, the $Q$ curvature is given by%
\begin{equation}
Q=-\frac{1}{2\left( n-1\right) }\Delta R-\frac{2}{\left( n-2\right) ^{2}}%
\left\vert Rc\right\vert ^{2}+\frac{n^{3}-4n^{2}+16n-16}{8\left( n-1\right)
^{2}\left( n-2\right) ^{2}}R^{2}.  \label{eq1.1}
\end{equation}%
Here $R$ is the scalar curvature, $Rc$ is the Ricci tensor. The Paneitz
operator is given by%
\begin{eqnarray}
&&P\varphi  \label{eq1.2} \\
&=&\Delta ^{2}\varphi +\frac{4}{n-2}\func{div}\left( Rc\left( \nabla \varphi
,e_{i}\right) e_{i}\right) -\frac{n^{2}-4n+8}{2\left( n-1\right) \left(
n-2\right) }\func{div}\left( R\nabla \varphi \right) +\frac{n-4}{2}Q\varphi .
\notag
\end{eqnarray}%
Here $e_{1},\cdots ,e_{n}$ is a local orthonormal frame with respect to $g$.
For $n\neq 4$, under conformal transformation of the metric, the operator
satisfies%
\begin{equation}
P_{\rho ^{\frac{4}{n-4}}g}\varphi =\rho ^{-\frac{n+4}{n-4}}P_{g}\left( \rho
\varphi \right) .  \label{eq1.3}
\end{equation}%
Note this is similar to the conformal Laplacian operator, which appears
naturally when considering transformation law of the scalar curvature under
conformal change of metric (\cite{LP}). As a consequence we know%
\begin{equation}
P_{\rho ^{\frac{4}{n-4}}g}\varphi \cdot \psi d\mu _{\rho ^{\frac{4}{n-4}%
}g}=P_{g}\left( \rho \varphi \right) \cdot \rho \psi d\mu _{g}.
\label{eq1.4}
\end{equation}%
Here $\mu _{g}$ is the measure associated with metric $g$. Moreover%
\begin{equation}
\ker P_{g}=0\Leftrightarrow \ker P_{\rho ^{\frac{4}{n-4}}g}=0,  \label{eq1.5}
\end{equation}%
and under this assumption, the Green's functions $G_{P}$ satisfy the
transformation law%
\begin{equation}
G_{P,\rho ^{\frac{4}{n-4}}g}\left( p,q\right) =\rho \left( p\right)
^{-1}\rho \left( q\right) ^{-1}G_{P,g}\left( p,q\right) .  \label{eq1.6}
\end{equation}%
For $u,v\in C^{\infty }\left( M\right) $, we denote the quadratic form
associated with $P$ as%
\begin{eqnarray}
&&E\left( u,v\right)  \label{eq1.7} \\
&=&\int_{M}Pu\cdot vd\mu  \notag \\
&=&\int_{M}\left( \Delta u\Delta v-\frac{4}{n-2}Rc\left( \nabla u,\nabla
v\right) +\frac{n^{2}-4n+8}{2\left( n-1\right) \left( n-2\right) }R\nabla
u\cdot \nabla v\right.  \notag \\
&&\left. +\frac{n-4}{2}Quv\right) d\mu  \notag
\end{eqnarray}%
and%
\begin{equation}
E\left( u\right) =E\left( u,u\right) .  \label{eq1.8}
\end{equation}%
By the integration by parts formula in (\ref{eq1.7}) we know $E\left(
u,v\right) $ also makes sense for $u,v\in H^{2}\left( M\right) $.

To continue we recall (see \cite{LP}) for $n\geq 3$, on a smooth compact
Riemannian manifold $\left( M^{n},g\right) $, the conformal Laplacian
operator is given by%
\begin{equation}
L_{g}\varphi =-\frac{4\left( n-1\right) }{n-2}\Delta \varphi +R\varphi .
\label{eq1.9}
\end{equation}%
The Yamabe invariant is defined as%
\begin{eqnarray}
&&Y\left( g\right)  \label{eq1.10} \\
&=&\inf \left\{ \frac{\int_{M}\widetilde{R}d\widetilde{\mu }}{\left( 
\widetilde{\mu }\left( M\right) \right) ^{\frac{n-2}{n}}}:\widetilde{g}=\rho
^{2}g\text{ for some positive smooth function }\rho \right\}  \notag \\
&=&\inf \left\{ \frac{\int_{M}L_{g}\varphi \cdot \varphi d\mu }{\left\Vert
\varphi \right\Vert _{L^{\frac{2n}{n-2}}}^{2}}:\varphi \text{ is a nonzero
smooth function on }M\right\} .  \notag
\end{eqnarray}%
A basic but useful fact about the scalar curvature is%
\begin{gather}
Y\left( g\right) >0\Leftrightarrow \lambda _{1}\left( L_{g}\right) >0
\label{eq1.11} \\
\Leftrightarrow g\text{ is conformal to a metric with scalar curvature}>0%
\text{.}  \notag
\end{gather}%
Indeed more is true, namely the equivalence still holds if we replace all "$%
> $" by "$="$ or "$<$". Here $\lambda _{1}\left( L_{g}\right) $ is the first
eigenvalue of $L_{g}$. It is clear $Y\left( g\right) $ is a conformal
invariant, on the other hand the sign of $\lambda _{1}\left( L_{g}\right) $
is also conformally invariant. The main reason that (\ref{eq1.11}) holds is
based on the fact the first eigenfunction of a second order symmetric
differential operator does not change sign. Unfortunately such kind of
property is known to be not valid for higher order operators. The following
question keeps puzzling people from the beginning of research on $Q$
curvature in dimension other than $4$, namely: can we find a conformal
invariant condition which is equivalent to the existence of positive $Q$
curvature in the conformal class (in the same spirit as (\ref{eq1.11}))?
Here we give an answer to this question for conformal class with positive
Yamabe invariant.

\begin{theorem}
\label{thm1.1}Let $n>4$ and $\left( M^{n},g\right) $ be a smooth compact
Riemannian manifold with Yamabe invariant $Y\left( g\right) >0$, then the
following statements are equivalent

\begin{enumerate}
\item there exists a positive smooth function $\rho $ with $Q_{\rho ^{2}g}>0$%
.

\item $\ker P_{g}=0$ and the Green's function $G_{P}\left( p,q\right) >0$
for any $p,q\in M,p\neq q$.

\item $\ker P_{g}=0$ and there exists a $p\in M$ such that $G_{P}\left(
p,q\right) >0$ for $q\in M\backslash \left\{ p\right\} $.
\end{enumerate}
\end{theorem}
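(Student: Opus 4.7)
The proof proceeds cyclically, $(1)\Rightarrow(2)\Rightarrow(3)\Rightarrow(1)$. The starting observation, obtained by setting $\varphi=1$ in (\ref{eq1.3}) and using $P_g 1=\frac{n-4}{2}Q_g$, is that for any smooth positive $\sigma$,
\[
Q_{\sigma^{4/(n-4)}g}=\frac{2}{n-4}\,\sigma^{-(n+4)/(n-4)}\,P_g\sigma;
\]
hence, with $\rho=\sigma^{2/(n-4)}$, statement (1) is equivalent to the existence of a smooth positive $\sigma$ on $M$ with $P_g\sigma>0$ everywhere. The implication $(2)\Rightarrow(3)$ is trivial, and the bonus direction $(2)\Rightarrow(1)$ is easy: for any smooth $f>0$, setting $\sigma(x)=\int_M G_P(x,q)f(q)\,d\mu(q)$ produces a smooth positive $\sigma$ with $P_g\sigma=f>0$.

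For $(1)\Rightarrow(2)$, I would first establish coercivity of the quadratic form $E$ on $H^2(M)$, which yields $\ker P_g=0$ and the existence of $G_P$. By the conformal covariance (\ref{eq1.4}), $E_g(\sigma\varphi)=E_{\sigma^{4/(n-4)}g}(\varphi)$, so we may conformally change so that $Q_g>0$, making the constant function $1$ into a strict positive supersolution $P_g 1=\frac{n-4}{2}Q_g>0$. Combining this with $Y(g)>0$, which by (\ref{eq1.11}) provides a Sobolev-type inequality controlling $\int |\nabla u|^2$ and $\int u^2$ by $\int L_g u\cdot u$, one absorbs the indefinite Ricci term in (\ref{eq1.7}) via a Bochner/integration-by-parts manipulation to obtain $E(u)\geq c\|u\|_{H^2}^2$. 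For positivity $G_P>0$, I would then prove a positivity-preserving statement: if $P_g u=f\geq 0,\ f\not\equiv 0$, then $u>0$. Writing $u=\sigma w$ for the strict supersolution $\sigma>0$ reduces $P_g$ to an operator for which $w\equiv 1$ is a strict supersolution; a truncation/test-function argument against $w^{-}$ combined with coercivity then forces $w^{-}\equiv 0$. Strict positivity off the diagonal follows from the classical asymptotic $G_P(p,q)\sim c_n d(p,q)^{4-n}$ near the diagonal together with a connectedness argument.

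For $(3)\Rightarrow(1)$, I would use one-point positivity to manufacture the required supersolution directly. By the symmetry $G_P(p,q)=G_P(q,p)$, the hypothesis gives $G_P(\cdot,p)>0$ on $M\setminus\{p\}$. Choose a smooth $f_c\geq 0$ compactly supported in a small neighborhood of $p$ with $f_c(p)>0$, and a smooth $f_1>0$ everywhere on $M$. For $\epsilon>0$ define
\[
\sigma_\epsilon(x)=\int_M G_P(x,y)\bigl(f_c(y)+\epsilon f_1(y)\bigr)\,d\mu(y),
\]
so that $P_g\sigma_\epsilon=f_c+\epsilon f_1>0$ everywhere. To see $\sigma_\epsilon>0$ for $\epsilon$ small, observe that the principal piece $x\mapsto\int G_P(x,y)f_c(y)\,d\mu(y)$ is finite and strictly positive at $p$ (the singularity $G_P(p,y)\sim d(p,y)^{4-n}$ is integrable against $f_c$), hence bounded below by a positive constant in a neighborhood of $p$ by continuity; for $x$ away from $p$, when $f_c$ is concentrated enough this piece is close to $G_P(x,p)\int f_c\,d\mu$, which is uniformly bounded below by a positive constant by the hypothesis together with the diagonal blow-up. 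Thus $\sigma_\epsilon>0$ everywhere for $\epsilon$ sufficiently small, and (1) follows from the reformulation.

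The hard part is the positivity-preserving/maximum principle step in $(1)\Rightarrow(2)$. Unlike the second-order conformal Laplacian, the fourth-order Paneitz operator does not enjoy a maximum principle in general, and the proof must exploit the strict positive supersolution $\sigma$ afforded by (1) together with the Yamabe positivity $Y(g)>0$ -- effectively linking $P_g$ to a two-factor ``$L_g\circ(\text{positive})\circ L_g$" structure -- to transfer the second-order maximum principle enjoyed by $L_g$ to the Paneitz setting. The coercivity of $E$ is also technically nontrivial because the Ricci term in (\ref{eq1.7}) is indefinite and must be carefully absorbed.
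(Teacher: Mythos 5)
Your reformulation of (1) and the implication $(2)\Rightarrow(1)$ by integrating $G_P$ against an arbitrary positive $f$ are both correct, and the latter is in fact slicker than the paper's Krein--Rutman argument. The construction you sketch for $(3)\Rightarrow(1)$ (concentrating a bump near $p$ and perturbing by a small global positive term) is a genuinely different route from the paper, which instead proves $(3)\Rightarrow(2)$ by showing the function $\Theta(p)=\min_{q\neq p}G_P(p,q)$ never vanishes; your construction can be made rigorous with a compactness/continuity argument, but it still leaves the cycle open at the implication $(1)\Rightarrow(2)$.

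That implication is where the proposal has a real gap, and it is precisely the part the paper identifies as the main new ingredient. Two of your claims do not survive scrutiny. First, the proposed ``truncation/test-function argument against $w^{-}$'' does not work for a fourth-order operator: $w^{-}$ is not in $H^{2}$ in general, so pairing $Pu$ against $u^{-}$ does not produce $E(u^{-})$ plus controllable terms the way the second-order Yamabe argument does. Second, the proposed ``two-factor $L_g\circ(\text{positive})\circ L_g$'' structure is not available for a general metric; such a factorization holds on the round sphere but not otherwise, and the Gursky--Malchiodi iterated-maximum-principle approach it evokes requires the pointwise hypothesis $R>0$, whereas the theorem only assumes $Y(g)>0$. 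Relatedly, the coercivity of $E$ under $Y(g)>0$ and $Q>0$ is asserted but not actually established by the Bochner/Sobolev manipulation you describe, and in the paper triviality of $\ker P$ is derived as a corollary of the positivity statement, not via coercivity.

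What the paper does instead is prove the distributional identity
\begin{equation*}
P\Bigl(G_{L,p}^{\frac{n-4}{n-2}}\Bigr)=c_{n}\,\delta_{p}-\frac{n-4}{(n-2)^{2}}\,G_{L,p}^{\frac{n-4}{n-2}}\Bigl\vert Rc_{G_{L,p}^{\frac{4}{n-2}}g}\Bigr\vert_{g}^{2},
\end{equation*}
which is Proposition~\ref{prop2.1}. Pairing any $u\geq 0$ with $Pu\geq 0$ against $G_{L,p}^{(n-4)/(n-2)}$ then shows that $u(p)=0$ forces $Pu\equiv 0$ and $\bigl\vert Rc_{G_{L,p}^{4/(n-2)}g}\bigr\vert^{2}u\equiv 0$; unique continuation and the rigidity of Ricci-flat asymptotically flat manifolds (with Kuiper's theorem) then rule out a nontrivial $u$, which is Lemma~\ref{lem3.1}. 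This is the substitute for a maximum principle, and Proposition~\ref{prop3.1}, Corollary~\ref{cor3.1}, and Lemma~\ref{lem3.2} all flow from it. Without this identity, or some comparable device linking $G_L$ to $P$, there is no known way to pass from the conformally invariant hypothesis $Y(g)>0$, $Q>0$ to positivity of $G_P$, so your $(1)\Rightarrow(2)$ as proposed is incomplete.
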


Along the way we also find the following comparison inequality between
Green's function of $L$ and $P$.

\begin{proposition}
\label{prop1.1}Assume $n>4$, $\left( M^{n},g\right) $ is a smooth compact
Riemannian manifold with $Y\left( g\right) >0$, $Q\geq 0$ and not
identically zero, then $\ker P=0$ and%
\begin{equation}
G_{L}^{\frac{n-4}{n-2}}\leq c_{n}G_{P}.  \label{eq1.12}
\end{equation}%
Here%
\begin{equation}
c_{n}=2^{-\frac{n-6}{n-2}}n^{\frac{2}{n-2}}\left( n-1\right) ^{-\frac{n-4}{%
n-2}}\left( n-2\right) \left( n-4\right) \omega _{n}^{\frac{2}{n-2}},
\label{eq1.13}
\end{equation}%
$\omega _{n}$ is the volume of unit ball in $\mathbb{R}^{n}$. Moreover if $%
G_{L}^{\frac{n-4}{n-2}}\left( p,q\right) =c_{n}G_{P}\left( p,q\right) $ for
some $p\neq q$, then $\left( M,g\right) $ is conformal diffeomorphic to the
standard sphere.
\end{proposition}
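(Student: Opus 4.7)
The plan has four steps: establish $\ker P_g=0$ and $G_P>0$; build a bounded comparison function $\varphi$ on $M$ whose leading singularity at a base point $p$ cancels; show $P_g\varphi\geq 0$ via a conformal change to a scalar-flat metric; and invert $P_g$ using its Green's function. The equality case is then handled by the strong maximum principle together with a rigidity argument. For the first step, under $Y(g)>0$ and $Q\geq 0$, $Q\not\equiv 0$, a standard perturbation (or a prescribed-$Q$-curvature existence result in the conformal class, in the spirit of the references cited in Section~\ref{sec1}) produces a conformal metric with strictly positive $Q$-curvature, so \thmref{thm1.1} gives $\ker P_g=0$ and $G_P(p,q)>0$ for all $p\neq q$.

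Fix $p\in M$ and set $\varphi(q):=c_nG_P(p,q)-G_L(p,q)^{(n-4)/(n-2)}$ on $M\setminus\{p\}$. In $g$-normal coordinates at $p$, the classical leading asymptotics $G_L(p,\cdot)\sim[4n(n-1)\omega_n]^{-1}|x|^{-(n-2)}$ and $G_P(p,\cdot)\sim[2(n-2)(n-4)n\omega_n]^{-1}|x|^{-(n-4)}$ give $G_L^{(n-4)/(n-2)}(\cdot)\sim[4n(n-1)\omega_n]^{-(n-4)/(n-2)}|x|^{-(n-4)}$, and (\ref{eq1.13}) is exactly the value of $c_n$ making these two leading $|x|^{-(n-4)}$ singularities cancel in $\varphi$; since every other contribution to $G_L^{(n-4)/(n-2)}$ is strictly less singular than $|x|^{-(n-4)}$, the function $\varphi$ extends continuously to all of $M$. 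Now pass to $\tilde g:=G_L(p,\cdot)^{4/(n-2)}g$ on $M\setminus\{p\}$: by $L_gG_L(p,\cdot)=\delta_p$ we have $R_{\tilde g}\equiv 0$, so (\ref{eq1.1}) reduces to
\[
Q_{\tilde g}=-\tfrac{2}{(n-2)^2}|Rc_{\tilde g}|_{\tilde g}^2\leq 0.
\]
Applying (\ref{eq1.3}) with $\rho=G_L^{(n-4)/(n-2)}$ (so $\rho^{4/(n-4)}=G_L^{4/(n-2)}$) to the constant function $1$ yields
\[
P_g\bigl(G_L^{(n-4)/(n-2)}\bigr)=G_L^{(n+4)/(n-2)}\cdot\tfrac{n-4}{2}\,Q_{\tilde g}\leq 0\quad\text{on }M\setminus\{p\}.
\]
Since $\Delta^2(|x|^{-(n-4)})=2(n-2)(n-4)n\omega_n\,\delta_0$ while every lower-order singular contribution from $G_L^{(n-4)/(n-2)}$ is mapped to a locally integrable function by $P_g$, the $\delta_p$-piece in $P_g(G_L^{(n-4)/(n-2)})$ is exactly $c_n\delta_p$ by the choice of $c_n$, cancelling $P_g(c_nG_P(p,\cdot))=c_n\delta_p$; hence as a distribution on all of $M$,
\[
P_g\varphi=-G_L^{(n+4)/(n-2)}\cdot\tfrac{n-4}{2}\,Q_{\tilde g}\geq 0.
\]

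Because $G_P>0$ and the right-hand side above is a non-negative locally integrable function on $M$, the Green's function representation gives
\[
\varphi(q)=\int_M G_P(q,q')\,P_g\varphi(q')\,d\mu(q')\geq 0,
\]
which is exactly (\ref{eq1.12}). If equality $c_nG_P(p,q_0)=G_L(p,q_0)^{(n-4)/(n-2)}$ occurs for some $q_0\neq p$, then $\varphi(q_0)=0$; the strong maximum principle for $P_g$ forces $\varphi\equiv 0$, whence $Q_{\tilde g}\equiv 0$ and $Rc_{\tilde g}\equiv 0$ on $M\setminus\{p\}$. Thus $(M\setminus\{p\},\tilde g)$ is a complete asymptotically flat Ricci-flat manifold, and the rigidity case of the positive mass theorem forces $\tilde g$ to be flat Euclidean, so $(M,g)$ is conformally diffeomorphic to the standard sphere.

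The most delicate technical point is verifying the distributional identity for $P_g(G_L^{(n-4)/(n-2)})$ at $p$: one must show that no concentrated contribution arises beyond the $c_n\delta_p$ coming from the leading $|x|^{-(n-4)}$ singularity, which requires careful bookkeeping of the subleading terms in the expansion of $G_L$ at $p$ (and the fact that $P_g$ differs from $\Delta^2$ only by lower-order operators that map these subleading terms to locally integrable functions). The rigidity step in the equality case is also nontrivial, resting on the positive mass theorem.
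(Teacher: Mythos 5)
Your comparison function $\varphi = c_n G_{P,p} - G_L^{(n-4)/(n-2)}$, the passage to the scalar-flat metric $\tilde g=G_{L,p}^{4/(n-2)}g$, and the use of $P_g\varphi\geq 0$ together with positivity of $G_P$ all match the paper's strategy (which rests on the identity in Proposition~\ref{prop2.1} / equation~(\ref{eq2.1})). However, your first step contains a genuine gap. You propose to obtain $\ker P_g=0$ and $G_P>0$ by first producing a conformal metric with \emph{strictly} positive $Q$ curvature via ``a standard perturbation or a prescribed-$Q$-curvature existence result,'' and then quoting \thmref{thm1.1}. There is no standard elementary perturbation that passes from $Q\geq 0$, $Q\not\equiv 0$, $Y(g)>0$ to $Q_{\rho^2 g}>0$; the prescribed-$Q$-curvature results you allude to themselves require control on $G_P$ (and typically invoke positive mass), so the argument is circular: positivity of $G_P$ under these weaker hypotheses is precisely what needs to be proven. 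The paper's route (Lemmas~\ref{lem3.1}--\ref{lem3.2}) establishes $\ker P=0$ and $G_P>0$ \emph{directly} from the distributional identity~(\ref{eq2.1}): test against $G_{L,p}^{(n-4)/(n-2)}$ to show that a nonnegative $u$ with $Pu\geq 0$ vanishing at a point must vanish identically (unless $M$ is conformal to $S^n$), and then deduce positivity of $G_P$. That lemma also does the work of what you call ``the strong maximum principle for $P_g$,'' which as stated you neither define nor justify for a fourth-order operator.

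Two secondary points. First, your rigidity argument in the equality case invokes the rigidity case of the positive mass theorem, but to use it you would first need to know the ADM-type mass of $(M\setminus\{p\},\tilde g)$ vanishes, which is not immediate. The paper instead notes that $\tilde g$ is Ricci-flat and asymptotically flat, and applies relative volume comparison (Bishop--Gromov): $\mathrm{Ric}\geq 0$ forces $\mathrm{Vol}(B_r)\leq\omega_n r^n$, while asymptotic flatness forces equality in the limit, hence $\tilde g$ is flat. This keeps the argument elementary and PMT-free, as the authors emphasize. Second, you state but do not verify the distributional identity $P_g\bigl(G_L^{(n-4)/(n-2)}\bigr)=c_n\delta_p + (\text{an } L^1 \text{ function})$; the paper handles this by observing that the identity is conformally invariant and then passing to conformal normal coordinates at $p$, where the refined expansion $G_{L,p}=\tfrac{1}{4n(n-1)\omega_n}r^{2-n}(1+O^{(4)}(r))$ of Lee--Parker makes the $L^1$-bound $G_{L,p}^{(n-4)/(n-2)}\lvert Rc_{\tilde g}\rvert_g^2=O(r^{2-n})$ and the isolation of the $\delta_p$-contribution tractable. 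Working in plain $g$-normal coordinates, as you do, does not give you that sharp a remainder.
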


In dimension $3$ we have

\begin{theorem}
\label{thm1.2}Let $\left( M,g\right) $ be a smooth compact $3$ dimensional
Riemannian manifold with Yamabe invariant $Y\left( g\right) >0$, then the
following statements are equivalent

\begin{enumerate}
\item there exists a positive smooth function $\rho $ with $Q_{\rho ^{2}g}>0$%
.

\item $\ker P_{g}=0$ and $G_{P}\left( p,q\right) <0$ for any $p,q\in M,p\neq
q$.

\item $\ker P_{g}=0$ and there exists a $p\in M$ such that $G_{P}\left(
p,q\right) <0$ for $q\in M\backslash \left\{ p\right\} $.
\end{enumerate}
\end{theorem}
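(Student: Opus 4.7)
My plan is to prove the cycle $(1)\Rightarrow(2)\Rightarrow(3)\Rightarrow(1)$, with $(2)\Rightarrow(3)$ trivial. The sign reversal from Theorem~\ref{thm1.1} stems from the fact that in dimension three the coefficient $(n-4)/2=-1/2$ multiplying $Q$ in (\ref{eq1.2}) flips the natural positivity: $P_{g}(1)=-Q_{g}/2$, and the Euclidean fundamental solution of $\Delta^{2}$ on $\mathbb{R}^{3}$ is $-|x|/(8\pi)<0$, so $G_{P}$ is expected to be negative when $Q>0$. The transformation laws (\ref{eq1.3})--(\ref{eq1.6}) continue to hold for $n=3$ with the convention $\tilde{g}=\sigma^{4/(n-4)}g=\sigma^{-4}g$; the parametrization $\tilde{g}=\rho^{2}g$ of the statement corresponds to $\sigma=\rho^{-1/2}$, so the assertion in (1) is equivalent to the existence of a smooth $\sigma>0$ with $P_{g}\sigma<0$.

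The central tool I would establish, in the spirit of the Gursky--Malchiodi maximum principle for the four-dimensional Paneitz operator, is a strong maximum principle of the following type: if $Y(g)>0$ together with either $Q_{g}>0$ or the hypothesis (3), then any $\sigma\in C^{4}(M)$ with $P_{g}\sigma\leq 0$, $\sigma\not\equiv 0$, and $\sigma\geq 0$ somewhere is strictly positive on $M$. Granted this, both nontrivial implications follow. For $(1)\Rightarrow(2)$: conformally reduce to $Q_{g}>0$ via (\ref{eq1.5})--(\ref{eq1.6}); apply the principle to $\pm u$ for $u\in\ker P_{g}$ to force $u\equiv 0$; and approximate $-G_{P}(p,\cdot)$ by smooth solutions $u_{\varepsilon}$ of $P_{g}u_{\varepsilon}=-\phi_{\varepsilon}$ where $\phi_{\varepsilon}$ is a nonnegative Dirac sequence at $p$. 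The principle yields $u_{\varepsilon}>0$, so $-G_{P}(p,\cdot)\geq 0$ in the uniform limit, and strict negativity off $p$ then follows by unique continuation applied to the smooth $P$-harmonic function $-G_{P}(p,\cdot)$ on $M\setminus\{p\}$.

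For $(3)\Rightarrow(1)$, choose a nonnegative smooth $\phi$ compactly supported in $M\setminus\{p\}$ with $\int\phi\,d\mu>0$ and set $\sigma(q)=-\int_{M}G_{P}(q,r)\phi(r)\,d\mu(r)$; then $\sigma\in C^{\infty}(M)$ with $P_{g}\sigma=-\phi\leq 0$ and $\sigma\not\equiv 0$. In dimension three $G_{P}$ is continuous on $M\times M$ because its leading Euclidean singularity $-|x|/(8\pi)$ is bounded, so $\sigma(p)=-\int G_{P}(p,r)\phi(r)\,d\mu(r)>0$ directly from (3). The maximum principle above then upgrades this to $\sigma>0$ throughout $M$, and setting $\rho=\sigma^{-2}$ supplies a smooth positive conformal factor with $Q_{\rho^{2}g}>0$ via the transformation law.

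The principal obstacle is the strong maximum principle itself. Because $(n-4)/2<0$, the $Q$ term in the energy (\ref{eq1.7}) enters with the ``wrong'' sign and $E$ is indefinite, so standard fourth-order machinery does not apply. The positivity of $Y(g)$ must be used decisively: I expect to pick a conformal representative with $R>0$ (possible by $Y(g)>0$), to control the second-order lower terms of $P$ by $\|L_{g}^{1/2}u\|_{L^{2}}^{2}$ through integration by parts and the Sobolev inequality attached to $L_{g}$, and then to combine the resulting coercivity with the positive $Q$ term (or, in the second form of the maximum principle, with the global information from (3)) through a barrier or iteration argument adapted from the four-dimensional case.
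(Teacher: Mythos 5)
Your outline identifies the right objects (a strong maximum principle for $P$, the sign flip coming from $(n-4)/2=-1/2$, the need for $Y(g)>0$) but it is missing the paper's central tool and has a concrete gap in the $(3)\Rightarrow(1)$ step.

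The paper's proof rests entirely on the identity (\ref{eq2.1}), specialized to $n=3$ as (\ref{eq4.1}):
\begin{equation*}
P\bigl(G_{L,p}^{-1}\bigr) = -256\pi^2\,\delta_p + G_{L,p}^{-1}\bigl\vert Rc_{G_{L,p}^4 g}\bigr\vert_g^2 ,
\end{equation*}
which is proved by computing $P$ of the test function $G_{L,p}^{(n-4)/(n-2)}$ using (\ref{eq1.3}), the vanishing of the scalar curvature of the stereographic metric, and the formula $Q=-\frac{2}{(n-2)^2}|Rc|^2$ when $R=0$. This identity is what yields the maximum-principle statement: pairing it against a nonnegative $u\in H^2$ with $Pu\le 0$ and $u(p)=0$ forces both $E(G_{L,p}^{-1},u)=0$ and $\int G_{L,p}^{-1}|Rc_{G_{L,p}^4g}|^2 u\,d\mu=0$, hence $u\equiv 0$ unless $Rc_{G_{L,p}^4g}=0$, i.e.\ $(M,g)$ is conformal to $S^3$ (Lemma~\ref{lem4.1}). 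You name the maximum principle as ``the principal obstacle'' and then only speculate about coercivity of $E$ controlled by $L_g$; that is not a proof, and in fact $E$ is genuinely indefinite in dimension $3$ so that route is very unlikely to close. The Gursky--Malchiodi argument you invoke uses $R>0$ and $Q>0$ pointwise and a second-order factorization that is not available here with just $Y(g)>0$. Without (\ref{eq2.1}) or some equally sharp replacement, your scheme does not get off the ground.

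There is also a specific flaw in your $(3)\Rightarrow(1)$ step. You set $\sigma(q)=-\int G_P(q,r)\phi(r)\,d\mu(r)$ with $\phi\ge 0$ compactly supported away from $p$, so $P\sigma=-\phi$ vanishes off the support of $\phi$, and then $Q_{\sigma^{-4}g}=-2\sigma^{7}P_g\sigma$ vanishes there too. This gives $Q\ge 0$, not the required strict $Q>0$. Taking $\phi>0$ everywhere does not repair this, since then you have no a priori sign on $\sigma$ at any point unless you already know $G_P<0$ everywhere off the diagonal --- which is precisely statement (2). The paper circumvents this by proving $(3)\Rightarrow(2)$ first (via Lemma~\ref{lem4.1} applied to $G_{P,p}$ and a connectedness argument) and then $(2)\Rightarrow(1)$ by the Krein--Rutman theorem: the top eigenfunction of the integral operator with kernel $-G_P>0$ is strictly positive and produces a conformal metric with $Q>0$. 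Some argument of this type, producing a strictly positive right-hand side for $P$, is unavoidable.

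A smaller point: your claimed maximum principle ``under either $Q_g>0$ or hypothesis (3)'' is asserted but not supported; the paper's Proposition~\ref{prop4.1} requires $Q\ge 0$, and the passage from (3) to a sign on $G_P$ goes through Lemma~\ref{lem4.1}, not through a direct maximum principle conditional on (3).
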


Similar to Proposition \ref{prop1.1}, we have

\begin{proposition}
\label{prop1.2}Let $\left( M,g\right) $ be a smooth compact $3$ dimensional
Riemannian manifold with $Y\left( g\right) >0$, $Q\geq 0$ and not
identically zero, then $\ker P=0$ and%
\begin{equation}
G_{L}^{-1}\leq -256\pi ^{2}G_{P}.  \label{eq1.14}
\end{equation}%
If for some $p,q\in M$, $G_{L}^{-1}\left( p,q\right) =-256\pi
^{2}G_{P}\left( p,q\right) $, then $\left( M,g\right) $ is conformal
diffeomorphic to the standard $S^{3}$ (note here $p$ can be equal to $q$).
\end{proposition}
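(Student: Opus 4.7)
The plan parallels Proposition \ref{prop1.1} and exploits a lucky dimension-three coincidence: the Yamabe exponent $4/(n-2)=4$ and the Paneitz exponent $4/(n-4)=-4$ differ only in sign, so one conformal change serves both purposes. Fix $p\in M$ and set $u:=G_{L}(p,\cdot)>0$. The metric $\tilde g:=u^{4}g$ is smooth and scalar-flat on $M\setminus\{p\}$, and in dimension three scalar flatness forces $Q_{\tilde g}=-2|\mathrm{Rc}_{\tilde g}|^{2}\leq 0$. Writing $\tilde g=(1/u)^{-4}g$ and applying the conformal covariance (\ref{eq1.3}) to $\varphi\equiv 1$ yields
\[
P_{g}(1/u)\;=\;u^{7}P_{\tilde g}(1)\;=\;-\tfrac{1}{2}u^{7}Q_{\tilde g}\;\geq\;0\quad\text{on }M\setminus\{p\}.
\]

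Next I would upgrade this to a distributional identity on all of $M$. From $L_{g}u=\delta_{p}$ one reads $u(q)\sim 1/(32\pi r)$ with $r=d_{g}(p,q)$, hence $1/u\sim 32\pi r$ near $p$. Since the fundamental solution of $\Delta^{2}$ in $\mathbb{R}^{3}$ is $-|x|/(8\pi)$, a short computation gives $\Delta^{2}(32\pi r)=-256\pi^{2}\delta_{p}$, and one checks that the curvature-dependent and zero-order terms of $P_{g}$ applied to $1/u$ remain locally integrable near $p$, so
\[
P_{g}(1/u)\;=\;-\tfrac{1}{2}u^{7}Q_{\tilde g}\;-\;256\pi^{2}\delta_{p}\quad\text{on }M.
\]
Combined with $P_{g}G_{P}(p,\cdot)=\delta_{p}$, the continuous function
\[
h(q)\;:=\;\frac{1}{G_{L}(p,q)}\;+\;256\pi^{2}G_{P}(p,q)
\]
satisfies $P_{g}h=-\tfrac{1}{2}u^{7}Q_{\tilde g}\geq 0$ in the sense of distributions.

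To finish, I would first verify that $P_{g}$ is positive definite on $H^{2}(M)$ (in particular $\ker P_{g}=0$) under the hypotheses: in dimension three the Sobolev embedding $H^{2}\hookrightarrow C^{0}$ lets the negative $-\tfrac{1}{2}Qu^{2}$ contribution in (\ref{eq1.7}) be absorbed, the gradient term is controlled after passing to a Yamabe metric with $R>0$, and the coercivity is completed by a unique continuation argument, in the spirit of the authors' earlier analysis. With invertibility in hand, the Green's representation $h(q)=\int_{M}G_{P}(q,y)P_{g}h(y)\,d\mu(y)$ is available, and the desired inequality $h\leq 0$ reduces to $G_{P}<0$; I would obtain the latter by perturbing $Q$ upward to a strictly positive function in the conformal class (solvable because $\ker P_{g}=0$) and applying Theorem \ref{thm1.2}, then passing to the limit. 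For equality, $h(p,q_{0})=0$ for some $q_{0}$ (possibly $q_{0}=p$, which is the vanishing-mass case for the regular part of $G_{P}$) together with strict negativity of $G_{P}$ force $Q_{\tilde g}\equiv 0$ on $M\setminus\{p\}$, whence $\mathrm{Rc}_{\tilde g}\equiv 0$; a Ricci-flat three-manifold is flat, and the resulting asymptotically flat scalar-flat conformal compactification recovers the standard conformal structure on $S^{3}$.

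The main obstacle is the sign of $h$: no maximum principle is directly available for the fourth-order operator $P_{g}$, so one has to route through the sign of $G_{P}$, and establishing $G_{P}<0$ without circular appeal to Theorem \ref{thm1.2} requires the perturbation argument above. The accompanying distributional bookkeeping near $p$ is the other delicate point, since one must verify that the coefficient $-256\pi^{2}$ of $\delta_{p}$ receives no contribution from the curvature terms of $P_{g}$ acting on the singular function $1/u$.
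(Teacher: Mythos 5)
Your pointwise identity $P_g(1/u)=u^{-1}|Rc_{u^4g}|_g^2\geq 0$ on $M\setminus\{p\}$, the derivation of the constant $-256\pi^2$ from the singular expansion of $G_L$, and the distributional upgrade to the identity (\ref{eq4.1}) are all correct and match the paper. The equality-case analysis (Green's representation plus $G_P<0$ forces $Rc_{u^4g}\equiv 0$, hence flat, hence conformal to $S^3$) is also sound. The gap lies in how you propose to obtain $\ker P=0$ and $G_P<0$.

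First, your claim that $P_g$ is positive definite on $H^2(M)$ is false: in dimension three $P(1)=-\tfrac12 Q\leq 0$, so already $E(1,1)=-\tfrac12\int_M Q\,d\mu<0$ under the hypotheses (e.g.\ on the round $S^3$). You only need $\ker P=0$, not positive definiteness, but as stated the assertion is wrong and the sketched coercivity argument cannot succeed. Second, and more seriously, your route to $G_P<0$ via Theorem~\ref{thm1.2} is circular as written. To invoke Theorem~\ref{thm1.2}(1)$\Rightarrow$(2) you must first produce a positive conformal factor $\rho$ with $Q_{\rho^2 g}>0$. You say this is ``solvable because $\ker P_g=0$,'' but solvability of $P_g u=-Q-\varepsilon$ is not the issue; what you need is that the solution $u$ is strictly positive, and that is precisely the content of the paper's Proposition~\ref{prop4.1} (if $Pu\leq 0$ and $u$ not constant then $u>0$), which in turn rests on the key Lemma~\ref{lem4.1}. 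Nothing in your proposal supplies this positivity. The subsequent ``passing to the limit'' is also unnecessary: Theorem~\ref{thm1.2} gives $G_{P,g}<0$ directly once one $\rho$ is found, not an $\varepsilon$-family of approximations. The paper sidesteps all of this by proving Lemma~\ref{lem4.1} (the rigidity statement: $u\geq 0$, $Pu\leq 0$, $u(p)=0$ forces $u\equiv 0$ unless $M$ is conformally $S^3$), from which Proposition~\ref{prop4.1}, Corollary~\ref{cor4.1} ($\ker P=0$), and Lemma~\ref{lem4.2} ($G_P<0$) follow directly, and then Proposition~\ref{prop1.2} is a one-line consequence of (\ref{eq4.1}) and the sign of $G_P$. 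You should replace the perturbation/limit step with that direct chain, which is precisely the maximum-principle substitute the identity (\ref{eq2.1}) was designed to provide.
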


In dimension $4$ we have the following (see Corollary \ref{cor5.1})

\begin{proposition}
\label{prop1.3}Assume $\left( M,g\right) $ is a smooth compact $4$
dimensional Riemannian manifold, $Y\left( g\right) >0$, then for any $p\in M$%
,%
\begin{equation}
\int_{M}Qd\mu +\frac{1}{2}\int_{M}\left\vert Rc_{G_{L,p}^{2}g}\right\vert
_{G_{L,p}^{2}g}^{2}d\mu _{G_{L,p}^{2}g}=16\pi ^{2}.  \label{eq1.15}
\end{equation}%
In particular, $\int_{M}Qd\mu \leq 16\pi ^{2}$ and equality holds if and
only if $\left( M,g\right) $ is conformal diffeomorphic to the standard $%
S^{4}$.
\end{proposition}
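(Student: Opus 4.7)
\emph{Proof plan.} Since $Y(g)>0$, the Green's function $G := G_{L,p}$ of $L_g$ at $p$ is strictly positive on $M\setminus\{p\}$, so $\tilde g := G^2 g$ is a smooth Riemannian metric on $M\setminus\{p\}$. The transformation law of $L_g$ together with $L_g G = 0$ away from $p$ forces $R_{\tilde g}\equiv 0$, while the fundamental-solution asymptotics $G(q) = \frac{1}{24\pi^2}\, d_g(p,q)^{-2} + O(d_g(p,q)^{-1})$ show, after inversion, that $(M\setminus\{p\},\tilde g)$ is asymptotically Euclidean with a single end modeled on flat $\mathbb{R}^4$. The plan is to exploit scalar-flatness to rewrite $Q_{\tilde g}$ and then to subtract two Chern-Gauss-Bonnet identities.

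Setting $R_{\tilde g}\equiv 0$ in (\ref{eq1.1}) at $n=4$ yields $Q_{\tilde g} = -\tfrac{1}{2}|Rc_{\tilde g}|^2_{\tilde g}$, hence
\begin{equation*}
\int_{M\setminus\{p\}} Q_{\tilde g}\,d\mu_{\tilde g} = -\tfrac{1}{2}\int_{M\setminus\{p\}}|Rc_{\tilde g}|^2\,d\mu_{\tilde g}.
\end{equation*}
Applying Chern-Gauss-Bonnet to the closed manifold $(M,g)$ gives $8\pi^2\chi(M) = \int_M(Q_g + \tfrac{1}{4}|W_g|^2)\,d\mu_g$. Applying it to the asymptotically Euclidean manifold $(M\setminus\{p\},\tilde g)$, via truncation to $M\setminus B_\epsilon^g(p)$ and passage to the limit $\epsilon\to 0$, yields an extra $8\pi^2$ contribution from the flat end --- the same boundary integral as that of the sphere bounding a flat ball in $\mathbb{R}^4$:
\begin{equation*}
8\pi^2\chi(M\setminus\{p\}) = \int_{M\setminus\{p\}}\left(Q_{\tilde g} + \tfrac{1}{4}|W_{\tilde g}|^2\right)d\mu_{\tilde g} + 8\pi^2.
\end{equation*}
Using $\chi(M\setminus\{p\}) = \chi(M)-1$ and the pointwise conformal invariance of $|W|^2\,d\mu$ in dimension four, subtracting the two identities cancels the Weyl terms and produces $\int_M Q_g\,d\mu_g - \int_{M\setminus\{p\}} Q_{\tilde g}\,d\mu_{\tilde g} = 16\pi^2$, which combined with the previous display is exactly (\ref{eq1.15}).

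The inequality $\int_M Q\,d\mu \leq 16\pi^2$ is then immediate, and equality forces $Rc_{\tilde g}\equiv 0$ on $M\setminus\{p\}$. The resulting Ricci-flat, asymptotically Euclidean metric $\tilde g$ has a single end diffeomorphic to $\mathbb{R}^4$; the rigidity part of the positive mass theorem (applicable because $R_{\tilde g}=0$) then identifies $(M\setminus\{p\},\tilde g)$ with Euclidean $\mathbb{R}^4$, so that $M$ is the one-point compactification $S^4$ and $g = G^{-2}\tilde g$ is conformal to the round metric. The main technical point is the Chern-Gauss-Bonnet formula for $(M\setminus\{p\},\tilde g)$ with the precise defect $8\pi^2$: this requires Chern-Gauss-Bonnet with boundary on $M\setminus B_\epsilon^g(p)$, where the $\epsilon\to 0$ behavior of the boundary integrand has to be controlled using the explicit singular asymptotics of $G$ at $p$ (together with the fact that in the $\tilde g$-geometry the shrinking sphere $\partial B_\epsilon^g(p)$ becomes a large, nearly Euclidean sphere in the AE end).
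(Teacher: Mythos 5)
Your argument is correct in outline, but it is genuinely different from the paper's. The paper first proves the distributional identity
\begin{equation*}
P\bigl(\log G_{L,p}\bigr)=16\pi^{2}\delta_{p}-\tfrac{1}{2}\bigl|Rc_{G_{L,p}^{2}g}\bigr|_{g}^{2}-Q
\end{equation*}
(Proposition \ref{prop5.1}), established by passing to conformal normal coordinates where $G_{L,p}=\frac{1}{24\pi^2}r^{-2}(1+O^{(4)}(r^2))$, and then simply integrates it over $M$: since in dimension $4$ the Paneitz operator annihilates constants, $\int_{M}P(\log G_{L,p})\,d\mu=0$, and the identity (\ref{eq1.15}) drops out after using the pointwise conformal invariance $|Rc_{\tilde g}|_{g}^{2}\,d\mu_{g}=|Rc_{\tilde g}|_{\tilde g}^{2}\,d\mu_{\tilde g}$. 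You instead argue via two Chern--Gauss--Bonnet identities, one for the closed manifold $(M,g)$ and one for the asymptotically Euclidean blow-up $(M\setminus\{p\},\tilde g)$ with a defect of $8\pi^{2}$ from the single flat end; subtraction cancels the Weyl terms and gives the same conclusion. This is essentially Gursky's route in \cite{G}, and it works, but it outsources the singularity analysis to the Chern--Gauss--Bonnet boundary term on $\partial B_{\epsilon}^{g}(p)$ as $\epsilon\to 0$, which you quite rightly flag as the main technical point: you need the AE asymptotics of $\tilde g$ (and hence the expansion of $G_{L,p}$) to be sharp enough that the transgression form converges to $8\pi^2$. The paper's approach absorbs exactly this singularity analysis into the proof of (\ref{eq5.5}), and benefits from the fact that the identity (\ref{eq5.9}) is manifestly conformally invariant, so one can freely pass to conformal normal coordinates; the CGB defect formula is not conformally invariant term-by-term, so you do not get this simplification for free.

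One more point: for the rigidity statement you invoke the positive mass theorem, but you only know $Rc_{\tilde g}\equiv 0$, not that the ADM mass vanishes, so PMT rigidity as stated is not directly applicable (and in any case it is a heavier tool than needed). Since $\tilde g$ is Ricci-flat, complete, and has Euclidean volume growth at its single end, the Bishop--Gromov relative volume comparison already forces $(M\setminus\{p\},\tilde g)$ to be isometric to flat $\mathbb{R}^4$; this is exactly the argument the paper uses in the proof of Lemma \ref{lem3.1}, and it keeps the proof elementary, as the paper emphasizes.
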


It is worthwhile to point out that the proof of Theorem B in \cite{G}, which
gives the inequality in Proposition \ref{prop1.3}, is elementary and does
not use the positive mass theorem. Our argument is also elementary and
identifies the difference between $\int_{M}Qd\mu $ and $16\pi ^{2}$.

Theorem \ref{thm1.1} and \ref{thm1.2} are motivated by works on the $Q$
curvature in dimension $5$ or higher (\cite{GM,HeR1,HeR2,HuR}) and in
dimension $3$ (\cite{HY1,HY2,HY3}). In \cite{HeR1,HeR2}, it was shown in
some cases compactness property for solutions of the $Q$ curvature equation
can be derived under the assumption that the Green's function is positive.
Recently \cite{GM} showed that the Green's function is indeed positive when
both scalar curvature and $Q$ curvature are positive. Theorem \ref{thm1.1}
says we could relax the assumption to $Y\left( g\right) >0,Q_{g}>0$. Whether
these two kinds of assumptions are equivalent or not is still unknown. The
main approach in \cite{GM} is roughly speaking by applying the maximum
principle twice. In \cite{HY3}, by replacing maximum principle with the weak
Harnack inequality it was shown that for metrics with $R>0$ and $Q>0$, $P$
is invertible and $G_{P}\left( p,q\right) <0$ for $p\neq q$. Theorem \ref%
{thm1.2} relax the assumption to $Y\left( g\right) >0$ and $Q>0$. The main
new ingredient in our proof of Theorem \ref{thm1.1} and \ref{thm1.2} is the
formula (\ref{eq2.1}), which is closely related to the arguments in \cite%
{HuR}. In \cite{HY4}, we will apply the results on Green's function to
solution of $Q$ curvature equations. In section \ref{sec2} we will prove the
main formula (\ref{eq2.1}). In sections \ref{sec3} and \ref{sec4} we will
prove Theorem \ref{thm1.1} and \ref{thm1.2} respectively. In section \ref%
{sec5} we will derive the corresponding formula of (\ref{eq2.1}) in
dimension $4$. In particular Proposition \ref{prop1.3} follows from the
formula. In section \ref{sec6}, we will show that the positive mass theorem
for Paneitz operator in \cite{GM,HuR} can be deduced from (\ref{eq2.1}) too.

\section{An identity connecting the Green's function of conformal Laplacian
operator and Paneitz operator\label{sec2}}

Here we will derive an interesting formula which illustrates the close
relation between Green's function of conformal Laplacian operator and the
Paneitz operator. This identity will play a crucial role later.

To motivate the discussion, we note that positive Yamabe invariant implies
we have a positive Green's function for the conformal Laplacian operator.
Even though we do not know whether $P$ is invertible or not, we may still
try to search for its Green's function. Note that the possible Green's
function should have same highest order singular term as $G_{L,p}^{\frac{n-4%
}{n-2}}$ (modulus dimension constant), we can use $G_{L,p}^{\frac{n-4}{n-2}}$
as a first step approximation. Along this line we compute $P\left( G_{L,p}^{%
\frac{n-4}{n-2}}\right) $ and arrive at the interesting formula (\ref{eq2.1}%
).

\begin{proposition}
\label{prop2.1}Assume $n\geq 3$, $n\neq 4$, $\left( M,g\right) $ is a $n$
dimensional smooth compact Riemannian manifold with $Y\left( g\right) >0$, $%
p\in M$, then we have $G_{L,p}^{\frac{n-4}{n-2}}\left\vert Rc_{G_{L,p}^{%
\frac{4}{n-2}}g}\right\vert _{g}^{2}\in L^{1}\left( M\right) $ and%
\begin{equation}
P\left( G_{L,p}^{\frac{n-4}{n-2}}\right) =c_{n}\delta _{p}-\frac{n-4}{\left(
n-2\right) ^{2}}G_{L,p}^{\frac{n-4}{n-2}}\left\vert Rc_{G_{L,p}^{\frac{4}{n-2%
}}g}\right\vert _{g}^{2}  \label{eq2.1}
\end{equation}%
in distribution sense. Here%
\begin{equation}
c_{n}=2^{-\frac{n-6}{n-2}}n^{\frac{2}{n-2}}\left( n-1\right) ^{-\frac{n-4}{%
n-2}}\left( n-2\right) \left( n-4\right) \omega _{n}^{\frac{2}{n-2}},
\label{eq2.2}
\end{equation}%
$\omega _{n}$ is the volume of unit ball in $\mathbb{R}^{n}$, $G_{L,p}$ is
the Green's function of conformal Laplacian operator $L=-\frac{4\left(
n-1\right) }{n-2}\Delta +R$ with pole at $p$.
\end{proposition}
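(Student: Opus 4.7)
The strategy rests on a single key observation: the metric $\hat{g} := G_{L,p}^{4/(n-2)} g$ is scalar-flat on $M \setminus \{p\}$. Indeed, the conformal transformation law for scalar curvature gives $R_{\hat{g}} = G_{L,p}^{-(n+2)/(n-2)} L_g G_{L,p}$, which vanishes off $p$ by the defining property of the Green's function. Plugging $R_{\hat{g}} \equiv 0$ into formula (\ref{eq1.1}) reduces the $Q$-curvature on $M \setminus \{p\}$ to
\[
Q_{\hat{g}} \;=\; -\frac{2}{(n-2)^{2}}\, |Rc_{\hat{g}}|_{\hat{g}}^{2}.
\]

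Next I would apply the conformal rule (\ref{eq1.3}) with $\rho = G_{L,p}^{(n-4)/(n-2)}$ (chosen so that $\rho^{4/(n-4)} g = \hat{g}$) and $\varphi \equiv 1$. Since $P_{\hat{g}}(1) = \frac{n-4}{2}Q_{\hat{g}}$, this yields
\[
P_{g}\!\left(G_{L,p}^{(n-4)/(n-2)}\right) \;=\; \frac{n-4}{2}\, G_{L,p}^{(n+4)/(n-2)} Q_{\hat{g}} \;=\; -\frac{n-4}{(n-2)^{2}} \, G_{L,p}^{(n+4)/(n-2)} |Rc_{\hat{g}}|_{\hat{g}}^{2}
\]
on $M \setminus \{p\}$. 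Rewriting the norm via $|T|^{2}_{\hat{g}} = G_{L,p}^{-8/(n-2)} |T|^{2}_{g}$ for $(0,2)$-tensors produces the bulk part of (\ref{eq2.1}).

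To promote this to a distributional identity across $p$, I would expand $G_{L,p}$ in $g$-normal coordinates around $p$: the leading singularity is $a_{n} r^{-(n-2)}$ with $a_{n} = 1/(4(n-1) n \omega_{n})$, fixed by $L_{g} G_{L,p} = \delta_{p}$. Hence $G_{L,p}^{(n-4)/(n-2)}$ has leading term $a_{n}^{(n-4)/(n-2)} r^{-(n-4)}$, and since the top-order part of $P_{g}$ near $p$ is $\Delta^{2}$, the classical distributional identity $\Delta^{2}(|x|^{-(n-4)}) = 2(n-4)(n-2) n \omega_{n}\, \delta_{0}$ on $\mathbb{R}^{n}$ contributes a point mass at $p$ of weight $a_{n}^{(n-4)/(n-2)} \cdot 2(n-4)(n-2) n \omega_{n}$, which after simplification is exactly the $c_{n}$ of (\ref{eq2.2}). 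Rigorously, I would test against $\varphi \in C^{\infty}(M)$ and write $\langle P_{g}(G_{L,p}^{(n-4)/(n-2)}), \varphi\rangle = \lim_{\varepsilon \to 0} \int_{M \setminus B(p,\varepsilon)} G_{L,p}^{(n-4)/(n-2)}\, P_{g}\varphi\, d\mu_{g}$, use the symmetric bilinear form (\ref{eq1.7}) on the exterior domain, insert the pointwise identity on the bulk, and extract the contribution $c_{n} \varphi(p)$ from the boundary integrals on $\partial B(p,\varepsilon)$ via the leading expansion.

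The main obstacle is the $L^{1}$ integrability of $G_{L,p}^{(n-4)/(n-2)} |Rc_{\hat{g}}|_{g}^{2}$ near $p$: a naive pointwise bound in $g$-coordinates is not enough, because for the pure leading singularity $u = a_{n} r^{-(n-2)}$ on Euclidean $\mathbb{R}^{n}$ the metric $u^{4/(n-2)} g_{\text{Eucl}}$ is literally flat via the inversion $r \mapsto 1/r$, so $Rc_{\hat{g}}$ vanishes at leading order and the actual size of $|Rc_{\hat{g}}|^{2}$ is controlled by the subleading terms in the expansion of $G_{L,p}$. A cleaner route is to pass to the $\hat{g}$-measure and rewrite
\[
\int_{M} G_{L,p}^{(n-4)/(n-2)} |Rc_{\hat{g}}|_{g}^{2}\, d\mu_{g} \;=\; \int_{M} G_{L,p}^{-(n-4)/(n-2)} |Rc_{\hat{g}}|_{\hat{g}}^{2}\, d\mu_{\hat{g}},
\]
and analyze the right-hand side at the asymptotically flat end of the scalar-flat manifold $(M \setminus \{p\}, \hat{g})$: there $G_{L,p}^{-(n-4)/(n-2)}$ decays polynomially while the Ricci curvature of a scalar-flat AF metric decays fast enough to render the integrand integrable. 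This identification with an asymptotically flat geometry also explains the later reduction of the positive mass theorem in Section \ref{sec6} to this same formula.
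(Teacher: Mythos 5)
Your proposal follows essentially the same route as the paper: the bulk identity on $M\setminus\{p\}$ comes from the conformal covariance (\ref{eq1.3}) applied with $\rho=G_{L,p}^{(n-4)/(n-2)}$ and $\varphi\equiv 1$ together with $R_{\hat g}=0$ (this is exactly (\ref{eq2.9})), and the point mass $c_n\delta_p$ is extracted from the leading asymptotics $G_{L,p}\sim \frac{1}{4n(n-1)\omega_n}r^{2-n}$ via the Euclidean biharmonic fundamental solution; your arithmetic for $c_n$ checks out. Where you deviate is the $L^1$ step. You correctly identify the crucial cancellation: the naive estimate $|Rc_{\hat g}|_g=O(r^{-2})$ from $D^2\log G_{L,p}$ would give $O(r^{-n})$ and fail, and it is the exact flatness of the inverted Euclidean model that improves this by one power. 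The paper carries this out directly in conformal normal coordinates (using the expansion (\ref{eq2.4}) with the $O^{(4)}(r)$ error) to get $|Rc_{\hat g}|_g=O(r^{-1})$, hence an $O(r^{2-n})$ integrand. Your alternative is to rewrite the integral in the $\hat g$-measure and argue decay at the asymptotically flat end. That is a valid and instructive reformulation, but it is not actually a shortcut: the assertion that ``the Ricci curvature of a scalar-flat AF metric decays fast enough'' is not automatic from scalar-flatness alone and would need a quantitative decay rate for $\hat g-\delta$ (and two derivatives) at infinity, which comes from the very same expansion of $G_{L,p}$ that the direct computation uses. So your route is equivalent in substance, just phrased at the other end of the inversion; if you carry it out you should make the decay rate $\hat g-\delta=O(\rho^{-1})$ in $C^2$ (hence $Rc_{\hat g}=O(\rho^{-3})$) explicit, and also note that working in conformal normal coordinates, as the paper does, keeps the expansion (\ref{eq2.4}) clean.
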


It is worth pointing out that the metric $G_{L,p}^{\frac{4}{n-2}}g$ on $%
M\backslash \left\{ p\right\} $ is exactly the stereographic projection of $%
\left( M,g\right) $ at $p$ (\cite{LP}). To prove the proposition, let us
first check what happens under a conformal change of the metric. If $\rho
\in C^{\infty }\left( M\right) $ is a positive function, let $\widetilde{g}%
=\rho ^{\frac{4}{n-2}}g$, then using%
\begin{equation*}
G_{\widetilde{L},p}\left( q\right) =\rho \left( p\right) ^{-1}\rho \left(
q\right) ^{-1}G_{L,p}\left( q\right)
\end{equation*}%
we see%
\begin{equation}
G_{\widetilde{L},p}^{\frac{n-4}{n-2}}\left\vert Rc_{G_{\widetilde{L},p}^{%
\frac{4}{n-2}}\widetilde{g}}\right\vert _{\widetilde{g}}^{2}d\widetilde{\mu }%
=\rho \left( p\right) ^{-\frac{n-4}{n-2}}\rho ^{\frac{n-4}{n-2}}G_{L,p}^{%
\frac{n-4}{n-2}}\left\vert Rc_{G_{L,p}^{\frac{4}{n-2}}g}\right\vert
_{g}^{2}d\mu .  \label{eq2.3}
\end{equation}%
Hence we only need to check $G_{L,p}^{\frac{n-4}{n-2}}\left\vert
Rc_{G_{L,p}^{\frac{4}{n-2}}g}\right\vert _{g}^{2}\in L^{1}\left( M\right) $
for a conformal metric.

By the existence of conformal normal coordinate (\cite{LP}) we can assume $%
\exp _{p}$ preserve the volume near $p$. Let $x_{1},\cdots ,x_{n}$ be a
normal coordinate at $p$, denote $r=\left\vert x\right\vert $, then (see 
\cite{LP})%
\begin{equation}
G_{L,p}=\frac{1}{4n\left( n-1\right) \omega _{n}}r^{2-n}\left( 1+O^{\left(
4\right) }\left( r\right) \right) .  \label{eq2.4}
\end{equation}%
As usual, we say $f=O^{\left( m\right) }\left( r^{\theta }\right) $ to mean $%
f$ is $C^{m}$ in the punctured neighborhood with $\partial _{i_{1}\cdots
i_{k}}f=O\left( r^{\theta -k}\right) $ for $0\leq k\leq m$. By (\ref{eq2.4})
and the transformation law%
\begin{eqnarray}
Rc_{G_{L,p}^{\frac{4}{n-2}}g} &=&Rc-2D^{2}\log G_{L,p}+\frac{4}{n-2}d\log
G_{L,p}\otimes d\log G_{L,p}  \label{eq2.5} \\
&&-\left( \frac{2}{n-2}\Delta \log G_{L,p}+\frac{4}{n-2}\left\vert \nabla
\log G_{L,p}\right\vert ^{2}\right) g,  \notag
\end{eqnarray}%
careful calculation shows%
\begin{equation}
\left\vert Rc_{G_{L,p}^{\frac{4}{n-2}}g}\right\vert _{g}=O\left( \frac{1}{r}%
\right) .  \label{eq2.6}
\end{equation}%
It follows that%
\begin{equation*}
G_{L,p}^{\frac{n-4}{n-2}}\left\vert Rc_{G_{L,p}^{\frac{4}{n-2}}g}\right\vert
_{g}^{2}=O\left( r^{2-n}\right)
\end{equation*}%
hence it belongs to $L^{1}\left( M\right) $.

To continue, we observe that equation (\ref{eq2.1}) is the same as%
\begin{equation}
\int_{M}G_{L,p}^{\frac{n-4}{n-2}}P\varphi d\mu =c_{n}\varphi \left( p\right)
-\frac{n-4}{\left( n-2\right) ^{2}}\int_{M}G_{L,p}^{\frac{n-4}{n-2}%
}\left\vert Rc_{G_{L,p}^{\frac{4}{n-2}}g}\right\vert _{g}^{2}\varphi d\mu
\label{eq2.7}
\end{equation}%
for any $\varphi \in C^{\infty }\left( M\right) $. A similar check as before
shows (\ref{eq2.7}) is conformally invariant.

Again we assume $\exp _{p}$ preserves the volume near $p$, then for $\delta
>0$ small, it follows from (\ref{eq2.4}) that%
\begin{equation}
PG_{L,p}^{\frac{n-4}{n-2}}=c_{n}\delta +\text{a }L^{1}\text{ function}
\label{eq2.8}
\end{equation}%
on $B_{\delta }\left( p\right) $ in distribution sense. On the other hand,
on $M\backslash \left\{ p\right\} $ using (\ref{eq1.2}) and (\ref{eq1.3}) we
have%
\begin{eqnarray}
P_{g}\left( G_{L,p}^{\frac{n-4}{n-2}}\right) &=&G_{L,p}^{\frac{n+4}{n-2}%
}P_{G_{L,p}^{\frac{4}{n-2}}g}1  \label{eq2.9} \\
&=&\frac{n-4}{2}G_{L,p}^{\frac{n+4}{n-2}}Q_{G_{L,p}^{\frac{4}{n-2}}g}  \notag
\\
&=&-\frac{n-4}{\left( n-2\right) ^{2}}G_{L,p}^{\frac{n-4}{n-2}}\left\vert
Rc_{G_{L,p}^{\frac{4}{n-2}}g}\right\vert _{g}^{2}.  \notag
\end{eqnarray}%
Here we have used the fact $R_{G_{L,p}^{\frac{4}{n-2}}g}=0$. Combine (\ref%
{eq2.8}) and (\ref{eq2.9}) we get (\ref{eq2.1}).

\section{The case dimension $n>4$\label{sec3}}

Throughout this section we will assume $\left( M,g\right) $ is a smooth
compact Riemannian manifold with dimension $n>4$.

\begin{lemma}
\label{lem3.1}Assume $n>4$, $Y\left( g\right) >0$, $u\in C^{\infty }\left(
M\right) $ such that $u\geq 0$ and $Pu\geq 0$. If for some $p\in M$, $%
u\left( p\right) =0$, then $u\equiv 0$.
\end{lemma}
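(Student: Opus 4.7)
The strategy is to pair the key identity (2.1) with $u$. Concretely, apply (2.7) with $\varphi=u$ and pole at the given zero $p$ to get
\[
\int_{M} G_{L,p}^{\frac{n-4}{n-2}}\, Pu \, d\mu \;+\; \frac{n-4}{(n-2)^{2}} \int_{M} G_{L,p}^{\frac{n-4}{n-2}}\, \bigl|Rc_{g'_{p}}\bigr|_{g}^{2}\, u \, d\mu \;=\; c_{n}\, u(p),
\]
where $g'_{p}=G_{L,p}^{4/(n-2)}g$. The hypothesis $Y(g)>0$ forces $G_{L,p}>0$ on $M\setminus\{p\}$, and $n>4$ makes the coefficient $\frac{n-4}{(n-2)^{2}}$ positive. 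Together with $u\geq 0$ and $Pu\geq 0$, each integrand on the left is non-negative. The condition $u(p)=0$ makes the right-hand side vanish, so each integral must vanish separately.

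From the first identity, $G_{L,p}^{(n-4)/(n-2)}\,Pu\equiv 0$ almost everywhere on $M\setminus\{p\}$, and since $Pu$ is smooth and $G_{L,p}>0$, this yields $Pu\equiv 0$ on $M$. From the second, the pointwise relation $|Rc_{g'_{p}}|_{g}^{2}\,u\equiv 0$ holds on $M\setminus\{p\}$; in particular, $u$ vanishes on the open set $\{|Rc_{g'_{p}}|_{g}^{2}>0\}$. Substituting $Pu\equiv 0$ back into (2.7) at any pole $q$ then yields the reproducing formula
\[
c_{n}u(q)=\frac{n-4}{(n-2)^{2}}\int_{M}G_{L,q}^{\frac{n-4}{n-2}}\,\bigl|Rc_{g'_{q}}\bigr|_{g}^{2}\,u\,d\mu,
\]
valid for every $q\in M$.

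To finish, I would show that the closed nonempty set $Z=\{u=0\}$ is also open, which gives $Z=M$ by connectedness. Let $q_{0}\in Z$. Applying the pairing with pole $q_{0}$ (valid since $u(q_{0})=0$) produces $|Rc_{g'_{q_{0}}}|_{g}^{2}\,u\equiv 0$ on $M\setminus\{q_0\}$, so $u=0$ on $\{|Rc_{g'_{q_{0}}}|_{g}^{2}>0\}$. The main step is to show that $|Rc_{g'_{q_{0}}}|_{g}^{2}$ is strictly positive on some punctured neighborhood of $q_{0}$; this would force $u\equiv 0$ there, and by continuity $u\equiv 0$ on a full neighborhood of $q_{0}$, placing $q_{0}$ in the interior of $Z$. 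Establishing this pointwise positivity is the main obstacle and requires a careful Taylor expansion of $Rc_{g'_{q_{0}}}$ near $q_{0}$ via the asymptotics (2.4)--(2.6) and the transformation formula (2.5) in conformal normal coordinates at $q_{0}$. The degenerate situation --- where $|Rc_{g'_{q_{0}}}|_{g}^{2}$ may vanish on a whole neighborhood of $q_{0}$, as happens for locally conformally flat metrics (e.g.\ the round sphere, where the stereographic projection is flat) --- is handled separately by observing that in such cases the reproducing formula simplifies and, combined with $Pu\equiv 0$, directly yields $u\equiv 0$ in a neighborhood of $q_{0}$.
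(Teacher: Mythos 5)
Your opening steps agree with the paper: pairing the identity~(\ref{eq2.1}) with $u$ at the pole $p$ and using $u(p)=0$, $u\geq 0$, $Pu\geq 0$, $n>4$ indeed forces $Pu\equiv 0$ and $\left\vert Rc_{G_{L,p}^{4/(n-2)}g}\right\vert_g^2 u\equiv 0$. The divergence comes after that, and your route has a genuine gap.

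Your plan is to show $Z=\{u=0\}$ is open via a local argument at each $q_0\in Z$, and the step you flag as ``the main obstacle'' --- proving that $\left\vert Rc_{G_{L,q_0}^{4/(n-2)}g}\right\vert_g^2$ is strictly positive on a punctured neighborhood of $q_0$ --- is simply false in general. The metric $G_{L,q_0}^{4/(n-2)}g$ is the stereographic projection of $(M,g)$ at $q_0$; for any locally conformally flat $g$ (not just the round sphere) it is flat in a full punctured neighborhood of $q_0$, and for a generic $g$ the Ricci tensor of the blown-up metric need not be bounded away from zero near the pole either. Your ``degenerate case'' paragraph does not actually close this hole: when $\left\vert Rc_{G_{L,q_0}^{4/(n-2)}g}\right\vert^2\equiv 0$ near $q_0$, the reproducing formula at pole $q$ near $q_0$ gives $c_n u(q)$ as an integral you cannot evaluate without already knowing $u(q)=0$, so it yields no information. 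The open-closed argument therefore stalls exactly where you expected trouble, and no amount of Taylor expansion of (\ref{eq2.5})--(\ref{eq2.6}) will rescue it.

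The paper's proof avoids this by a global argument. Having established $Pu\equiv 0$ and $\left\vert Rc_{G_{L,p}^{4/(n-2)}g}\right\vert_g^2 u\equiv 0$, it supposes $u\not\equiv 0$ and invokes the \emph{unique continuation property} for the fourth-order operator $P$: since $Pu=0$, $u$ cannot vanish on any open set, so $\{u\neq 0\}$ is dense and hence $Rc_{G_{L,p}^{4/(n-2)}g}\equiv 0$ everywhere on $M\setminus\{p\}$. A Ricci-flat asymptotically flat manifold is isometric to $\mathbb{R}^n$ by relative volume comparison, so $(M,g)$ is locally conformally flat and simply connected, hence conformal to $S^n$ by Kuiper's theorem; but on $S^n$ one has $\ker P=0$, contradicting $Pu=0$ with $u\not\equiv 0$. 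If you want to complete your write-up, replace the attempted local positivity argument with this use of unique continuation plus the rigidity for Ricci-flat asymptotically flat ends.
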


\begin{proof}
By (\ref{eq2.1}) we have%
\begin{equation*}
\int_{M}G_{L,p}^{\frac{n-4}{n-2}}Pud\mu =-\frac{n-4}{\left( n-2\right) ^{2}}%
\int_{M}G_{L,p}^{\frac{n-4}{n-2}}\left\vert Rc_{G_{L,p}^{\frac{4}{n-2}%
}g}\right\vert _{g}^{2}ud\mu .
\end{equation*}%
Hence $Pu=0$ and $\left\vert Rc_{G_{L,p}^{\frac{4}{n-2}}g}\right\vert
_{g}^{2}u=0$.

If $u$ is not identically zero, then by unique continuation property we know 
$\left\{ u\neq 0\right\} $ is dense, hence $Rc_{G_{L,p}^{\frac{4}{n-2}}g}=0$%
. Since $\left( M\backslash \left\{ p\right\} ,G_{L,p}^{\frac{4}{n-2}%
}g\right) $ is asymptotically flat, it follows from relative volume
comparison theorem that $\left( M\backslash \left\{ p\right\} ,G_{L,p}^{%
\frac{4}{n-2}}g\right) $ is isometric to the standard $\mathbb{R}^{n}$. In
particular $\left( M,g\right) $ must be locally conformally flat and simply
connected compact manifold, hence it is conformal to the standard $S^{n}$ by 
\cite{K}. But in this case we have $\ker P=0$, hence $u=0$, a contradiction.
\end{proof}

\begin{remark}
\label{rmk3.1}Indeed the same argument gives us the \ following statement:
If $n>4$, $Y\left( g\right) >0$, $u\in L^{1}\left( M\right) $ such that $%
u\geq 0$ and $Pu\geq 0$ in distribution sense, for some $p\in M$, $u$ is
smooth near $p$ and $u\left( p\right) =0$, then $u\equiv 0$.
\end{remark}

A straightforward consequence of Lemma \ref{lem3.1} is the following useful
fact.

\begin{proposition}
\label{prop3.1}Assume $n>4$, $Y\left( g\right) >0$, $Q\geq 0$. If $u\in
C^{\infty }\left( M\right) $ such that $Pu\geq 0$ and $u$ is not identically
constant, then $u>0$.
\end{proposition}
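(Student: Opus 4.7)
The plan is to reduce the statement to an immediate application of Lemma~\ref{lem3.1} via a shift by the minimum. Let $m=\min_{M} u$ and let $p_{0}\in M$ be a point where the minimum is attained. The goal is to show $m>0$, which together with the hypothesis that $u$ is not identically constant will in fact give the stronger $u>0$ everywhere (since non-attainment of $0$ is automatic once $m>0$).

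The key observation is how $P$ acts on constants: from \eqref{eq1.2}, all the derivative terms vanish on a constant, so $P(c)=\tfrac{n-4}{2}Qc$ for any $c\in\mathbb{R}$. In particular, setting $v:=u-m$, one has
\begin{equation*}
Pv \;=\; Pu-\tfrac{n-4}{2}Qm.
\end{equation*}

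Now suppose for contradiction that $m\le 0$. Since $Q\ge 0$ and $m\le 0$, the term $-\tfrac{n-4}{2}Qm$ is $\ge 0$, and combined with $Pu\ge 0$ this yields $Pv\ge 0$. Moreover $v\ge 0$ on $M$ with $v(p_{0})=0$. Lemma~\ref{lem3.1} (applied with $p=p_{0}$) then forces $v\equiv 0$, i.e.\ $u\equiv m$, contradicting the hypothesis that $u$ is not identically constant. Hence $m>0$, which means $u>0$ throughout $M$.

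There is essentially no obstacle here beyond noticing the shift trick; the whole point is that Lemma~\ref{lem3.1} already encodes a strong maximum-principle-type statement for $P$ under $Y(g)>0$, and subtracting the minimum is the standard way to normalize so that the hypothesis $u\ge 0$, $u(p_{0})=0$ of that lemma is met. The sign hypothesis $Q\ge 0$ is used precisely to ensure that the shift does not spoil the inequality $Pv\ge 0$.
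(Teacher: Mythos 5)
Your proof is correct and takes essentially the same approach as the paper: subtract the minimum $m=\min_M u$ (the paper writes $\lambda=-m$ and adds it), use $P(\text{const})=\tfrac{n-4}{2}Q\cdot\text{const}$ together with $Q\ge 0$ to preserve the sign of $Pv$, and invoke Lemma~\ref{lem3.1} at the minimizing point to conclude $v\equiv 0$, contradicting non-constancy.
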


\begin{proof}
If the conclusion of the proposition is false, then $u\left( p\right)
=\min_{M}u\leq 0$ for some $p$. Let $\lambda =-u\left( p\right) \geq 0$,
then $u+\lambda \geq 0$, $u\left( p\right) +\lambda =0$ and%
\begin{equation*}
P\left( u+\lambda \right) =Pu+\frac{n-4}{2}\lambda Q\geq 0.
\end{equation*}%
It follows from the Lemma \ref{lem3.1} that $u+\lambda \equiv 0$. This
contradicts with the fact $u$ is not a constant function.
\end{proof}

Proposition \ref{prop3.1} helps us determine the null space of $P$ without
information on the first eigenvalue.

\begin{corollary}
\label{cor3.1}Assume $n>4$, $Y\left( g\right) >0$, $Q\geq 0$, then%
\begin{equation*}
\ker P\subset \left\{ \text{constant functions}\right\} .
\end{equation*}%
If in addition, $Q$ is not identically zero, then $\ker P=0$ i.e. $0$ is not
an eigenvalue of $P$.
\end{corollary}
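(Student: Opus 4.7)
The plan is to deduce both statements directly from Proposition \ref{prop3.1}, which has just been established.

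For the first claim, let $u\in\ker P$. Then $Pu=0$, so in particular both $Pu\geq 0$ and $P(-u)\geq 0$. If $u$ were not identically constant, then $-u$ is also not identically constant, and Proposition \ref{prop3.1} applied to $u$ would give $u>0$ on $M$, while the same proposition applied to $-u$ would give $-u>0$, i.e.\ $u<0$ on $M$. These two conclusions are incompatible, so $u$ must be constant. This yields $\ker P\subset\{\text{constant functions}\}$.

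For the second claim, observe from the definition \eqref{eq1.2} that when $u\equiv c$ is a constant, all derivative terms vanish and
\begin{equation*}
Pc=\frac{n-4}{2}Qc.
\end{equation*}
If $c\in\ker P$, then $\frac{n-4}{2}Qc\equiv 0$ on $M$. Since $n>4$ and $Q\not\equiv 0$ by assumption, this forces $c=0$. Combined with the first part, every element of $\ker P$ must be the zero constant, so $\ker P=0$ and $0$ is not an eigenvalue of $P$.

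The argument is essentially a two-line deduction from Proposition \ref{prop3.1}; the substantive content has already been packaged into that proposition (and ultimately into the identity \eqref{eq2.1} via Lemma \ref{lem3.1}). There is no real obstacle here, only bookkeeping: one must be careful to note that $-u$ satisfies the same hypotheses as $u$ when $Pu=0$, and to use the fact that the Paneitz operator sends constants to $\frac{n-4}{2}Q$ times the constant (which requires $n\neq 4$, exactly the dimensional restriction imposed).
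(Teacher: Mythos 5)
Your proof is correct and follows essentially the same route as the paper: applying Proposition \ref{prop3.1} to both $u$ and $-u$ to conclude that elements of $\ker P$ must be constant, and then using $Pc=\frac{n-4}{2}Qc$ to rule out nonzero constants when $Q\not\equiv 0$. The paper's own proof states only the first half explicitly and leaves the second implicit, so your write-up is if anything slightly more complete while taking the identical approach.
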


\begin{proof}
Assume $Pu=0$. If $u$ is not a constant function, then it follows from
Proposition \ref{prop3.1} that $u>0$ and $-u>0$, a contradiction.
\end{proof}

Now we ready to prove half of Theorem \ref{thm1.1}.

\begin{lemma}
\label{lem3.2}Assume $n>4$, $Y\left( g\right) >0$, $Q\geq 0$ and not
identically zero, then $\ker P=0$, moreover the Green's function $%
G_{P,p}\left( q\right) =G_{P}\left( p,q\right) >0$ for $p\neq q$.
\end{lemma}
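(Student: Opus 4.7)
The equality $\ker P=0$ is immediate from Corollary~\ref{cor3.1}, so $G_{P,p}$ is well defined and our only remaining task is to prove positivity away from $p$. The natural starting point is to compare $G_{P,p}$ with $G_{L,p}^{(n-4)/(n-2)}$ using the master identity (\ref{eq2.1}). Setting
\[ v:=c_n G_{P,p}-G_{L,p}^{\frac{n-4}{n-2}}\qquad\text{and}\qquad f:=\frac{n-4}{(n-2)^2}\,G_{L,p}^{\frac{n-4}{n-2}}\bigl|Rc_{G_{L,p}^{4/(n-2)}g}\bigr|_g^2, \]
the two $\delta_p$ terms cancel, and Proposition~\ref{prop2.1} combined with $PG_{P,p}=\delta_p$ yields the distributional identity $Pv=f$. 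Here $f$ is non-negative, smooth on $M\setminus\{p\}$, and lies in $L^1(M)$. Since $G_{L,p}^{(n-4)/(n-2)}>0$ on $M\setminus\{p\}$, it is enough to show $v\geq 0$ there, for then $c_n G_{P,p}\geq G_{L,p}^{(n-4)/(n-2)}>0$.

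To obtain $v\geq 0$ I would argue by smooth approximation. The local expansions used to prove Proposition~\ref{prop2.1} give $f=O(r^{2-n})$ near $p$, hence $f\in L^q(M)$ for every $1\leq q<n/(n-2)$; fix such a $q>1$ and choose smooth $f_k\geq 0$ with $f_k\to f$ in $L^q(M)$. Since $\ker P=0$, the operator $P\colon W^{4,q}(M)\to L^q(M)$ is an isomorphism, so there is a unique $v_k\in C^\infty(M)$ with $Pv_k=f_k$, and $\|v_k-v\|_{W^{4,q}(M)}\leq C\|f_k-f\|_{L^q(M)}$. Proposition~\ref{prop3.1} then delivers $v_k>0$ whenever $v_k$ is non-constant; while if $v_k$ is a constant $c$ the relation $\tfrac{n-4}{2}Qc=f_k\geq 0$ together with $Q\geq 0$, $Q\not\equiv 0$ forces $c\geq 0$. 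In either case $v_k\geq 0$. Passing to a subsequence converging almost everywhere to $v$ gives $v\geq 0$ a.e., and smoothness of $v$ on $M\setminus\{p\}$ upgrades this to $v\geq 0$ pointwise there.

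The main obstacle I anticipate is controlling the behavior of $v$ at the pole $p$: the constant $c_n$ is tuned so that the leading $r^{4-n}$ singularities of $c_n G_{P,p}$ and $G_{L,p}^{(n-4)/(n-2)}$ cancel exactly, but in high dimensions the subsequent terms in the two expansions generally do not match, and $v$ itself may still be unbounded near $p$. Proposition~\ref{prop3.1} therefore cannot be applied to $v$ directly, which is exactly why the $L^q$-approximation step is needed. As a side observation, the borderline case $f\equiv 0$ causes no trouble: it would force the asymptotically flat manifold $(M\setminus\{p\},G_{L,p}^{4/(n-2)}g)$ to be Ricci-flat and hence flat by relative volume comparison, so $(M,g)$ would be conformal to the round sphere, on which positivity of $G_{P,p}$ is immediate.
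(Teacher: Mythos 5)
Your proof is correct, but it takes a genuinely different route from the paper's. The paper proves positivity of $G_{P,p}$ in the simplest possible way: since $\ker P=0$, for every \emph{smooth} $f\geq 0$ the unique solution $u$ of $Pu=f$ satisfies $u\geq 0$ (by Proposition~\ref{prop3.1}, together with the trivial observation that a constant solution $c$ must have $c\geq 0$ because $\tfrac{n-4}{2}Qc=f\geq 0$ and $Q\geq 0$, $Q\not\equiv 0$); testing $u(p')=\int_M G_{P,p'}f\,d\mu\geq 0$ against all such $f$ forces $G_{P,p'}\geq 0$, and then Remark~\ref{rmk3.1} applied to $PG_{P,p'}=\delta_{p'}\geq 0$ upgrades this to strict positivity. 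You instead prove the sharper comparison $c_n G_{P,p}\geq G_{L,p}^{(n-4)/(n-2)}$ (which in the paper is Proposition~\ref{prop1.1}, deduced \emph{afterwards} from Lemma~\ref{lem3.2}) and read off positivity from $G_{L,p}>0$. The price you pay is that the source term $f$ in $Pv=f$ is singular at $p$, so you need the $L^q$-isomorphism theorem for $P$, smooth nonnegative approximation of $f$, and a passage to the limit; you also implicitly need to know that $v\in L^1$ and that $Pu=0$ distributionally for $u\in L^1$ forces $u$ smooth (hypoellipticity), to identify the $W^{4,q}$-limit of $v_k$ with $c_n G_{P,p}-G_{L,p}^{(n-4)/(n-2)}$. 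The paper avoids all of this by only ever applying $P^{-1}$ to smooth data and letting the Green's representation formula do the work. Your argument is self-contained and yields more (it gives Proposition~\ref{prop1.1} en route, and the strict inequality $>0$ comes for free from $G_{L,p}>0$ rather than from Remark~\ref{rmk3.1}), at the cost of more elliptic machinery.
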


\begin{proof}
By Corollary \ref{cor3.1}, we know $\ker P=0$. Hence for any $f\in C^{\infty
}\left( M\right) $, there exists a unique $u\in C^{\infty }\left( M\right) $
with $Pu=f$, moreover%
\begin{equation*}
u\left( p\right) =\int_{M}G_{P,p}\left( q\right) f\left( q\right) d\mu
\left( q\right) .
\end{equation*}%
If $f\geq 0$, it follows from the Proposition \ref{prop3.1} that $u\geq 0$.
Hence $G_{P,p}\geq 0$. If $G_{P,p}\left( q\right) =0$ for some $q$, since $%
PG_{P,p}=\delta _{p}\geq 0$ in distribution sense, we know from the Remark %
\ref{rmk3.1} that $G_{P,p}\equiv 0$, a contradiction. Hence $G_{P,p}\left(
q\right) >0$ for $p\neq q$.
\end{proof}

Next let us give the full argument of Theorem \ref{thm1.1}.

\begin{proof}[Proof of Theorem \protect\ref{thm1.1}]
(1)$\Rightarrow $(2): This follows from Lemma \ref{lem3.2}, (\ref{eq1.5})
and (\ref{eq1.6}).

(2)$\Rightarrow $(1): This follows from the classical Krein-Rutman theorem (%
\cite{L}). Since our case is relatively simple, we provide the argument
here. Define the integral operator $T$ as%
\begin{equation*}
Tf\left( p\right) =\int_{M}G_{P}\left( p,q\right) f\left( q\right) d\mu
\left( q\right) .
\end{equation*}%
$T$ is the inverse operator of $P$. Let%
\begin{equation*}
\alpha _{1}=\sup_{f\in L^{2}\left( M\right) \backslash \left\{ 0\right\} }%
\frac{\int_{M}Tf\cdot fd\mu }{\left\Vert f\right\Vert _{L^{2}}^{2}}>0.
\end{equation*}%
$\alpha _{1}$ is an eigenvalue of $T$. We note all eigenfunctions of $\alpha
_{1}$ does not change sign. Indeed say $T\varphi =\alpha _{1}\varphi $, $%
\int_{M}\varphi ^{2}d\mu =1$, we have%
\begin{equation*}
\int_{M}\left( \varphi _{+}^{2}+\varphi _{-}^{2}\right) d\mu =1.
\end{equation*}%
Here $\varphi _{+}=\max \left\{ \varphi ,0\right\} $, $\varphi _{-}=\max
\left\{ -\varphi ,0\right\} $. Without losing of generality, we assume $%
\varphi _{+}$ is not identically zero. Then%
\begin{eqnarray*}
\alpha _{1} &=&\int_{M}T\varphi \cdot \varphi d\mu \\
&=&\int_{M}T\varphi _{+}\cdot \varphi _{+}d\mu +\int_{M}T\varphi _{-}\cdot
\varphi _{-}d\mu -2\int_{M}T\varphi _{+}\cdot \varphi _{-}d\mu \\
&\leq &\alpha _{1}-2\int_{M}T\varphi _{+}\cdot \varphi _{-}d\mu .
\end{eqnarray*}%
Hence $\int_{M}T\varphi _{+}\cdot \varphi _{-}d\mu =0$. Since $T\varphi
_{+}>0$, we see $\varphi _{-}=0$. Hence $\varphi \geq 0$. Because $T\varphi
=\alpha _{1}\varphi $ we see $\varphi \in C^{\infty }\left( M\right) $ and $%
\varphi >0$. It follows that $\alpha _{1}$ must be a simple eigenvalue and $%
P\varphi =\alpha _{1}^{-1}\varphi $, hence%
\begin{equation*}
Q_{\varphi ^{\frac{4}{n-4}}g}=\frac{2}{n-4}P_{\varphi ^{\frac{4}{n-4}}g}1=%
\frac{2}{n-4}\varphi ^{-\frac{n+4}{n-4}}P_{g}\varphi =\frac{2}{n-4}\alpha
_{1}^{-1}\varphi ^{-\frac{8}{n-4}}>0.
\end{equation*}

(2)$\Rightarrow $(3): Assume $p_{0}\in M$ such that $G_{P,p_{0}}>0$. For $%
p\in M$, define%
\begin{equation}
\Theta \left( p\right) =\min_{q\in M\backslash \left\{ p\right\}
}G_{P}\left( p,q\right)
\end{equation}%
Then we have $\Theta \left( p_{0}\right) >0$. We note that $\Theta \left(
p\right) \neq 0$ for any $p\in M$. Otherwise, say $\Theta \left( p\right) =0$%
, then $G_{P,p}\geq 0$ and $G_{P,p}\left( q\right) =0$ for some $q\neq p$.
It follows from Remark \ref{rmk3.1} that $G_{P,p}=const$, a contradiction.
Since $M$ is connected we see $\Theta \left( p\right) >0$ for all $p$. In
another word, $G_{P}\left( p,q\right) >0$ for $p\neq q$.
\end{proof}

\begin{remark}
\label{rmk3.2}In the proof of (2)$\Rightarrow $(1), a similar argument tells
us if $\beta $ is an eigenvalue of $T$, $\beta \neq \alpha _{1}$, then $%
\left\vert \beta \right\vert <\alpha _{1}$. It follows that when $G_{P}$ is
positive, the smallest \textbf{positive} eigenvalue of $P$ must be simple
and its eigenfunction must be either strictly positive or strictly negative.
Moreover if $\lambda $ is a negative eigenvalue of $P$, then $\left\vert
\lambda \right\vert $ is strictly bigger than the smallest positive
eigenvalue.
\end{remark}

\begin{proof}[Proof of Proposition \protect\ref{prop1.1}]
By Lemma \ref{lem3.2} we know $\ker P=0$ and $G_{P}>0$. From (\ref{eq2.1})
we know%
\begin{equation*}
P\left( G_{L,p}^{\frac{n-4}{n-2}}-c_{n}G_{P,p}\right) =-\frac{n-4}{\left(
n-2\right) ^{2}}G_{L,p}^{\frac{n-4}{n-2}}\left\vert Rc_{G_{L,p}^{\frac{4}{n-2%
}}g}\right\vert _{g}^{2}\leq 0.
\end{equation*}%
Hence $G_{L,p}^{\frac{n-4}{n-2}}\leq c_{n}G_{P,p}$. If for some $q\neq p$, $%
G_{L,p}^{\frac{n-4}{n-2}}\left( q\right) =c_{n}G_{P,p}\left( q\right) $,
then $Rc_{G_{L,p}^{\frac{4}{n-2}}g}=0$, hence $\left( M,g\right) $ is
conformal diffeomorphic to the standard $S^{n}$ by the argument in the proof
of Lemma \ref{lem3.1}.
\end{proof}

\section{$3$ dimensional case\label{sec4}}

Throughout this section we assume $\left( M,g\right) $ is a smooth compact
Riemannian manifold of dimension $3$.

If $Y\left( g\right) >0$, then for $p\in M$, (\ref{eq2.1}) becomes%
\begin{equation}
P\left( G_{L,p}^{-1}\right) =-256\pi ^{2}\delta _{p}+G_{L,p}^{-1}\left\vert
Rc_{G_{L,p}^{4}g}\right\vert _{g}^{2}.  \label{eq4.1}
\end{equation}%
Note here $G_{L,p}^{-1}\in H^{2}\left( M\right) $.

\begin{lemma}
\label{lem4.1}Assume $Y\left( g\right) >0$, $u\in H^{2}\left( M\right) $
such that $u\geq 0$, $Pu\leq 0$ in distribution sense. If for some $p\in M$, 
$u\left( p\right) =0$, then either $u\equiv 0$ or $\left( M,g\right) $ is
conformal diffeomorphic to the standard $S^{3}$ and $u$ is a constant
multiple of $G_{P,p}$.
\end{lemma}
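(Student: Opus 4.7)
The plan is to mirror the strategy of Lemma \ref{lem3.1}, but with the dimension-3 version of the key identity, namely (\ref{eq4.1}). The crucial sign flip is that for $n=3$ the coefficient $-\tfrac{n-4}{(n-2)^{2}}$ is $+1$, so the zeroth-order term on the right of (\ref{eq4.1}) has the \emph{opposite} sign from the $n>4$ case. Accordingly we should swap $Pu\geq 0$ for $Pu\leq 0$ and expect $G_{P,p}$ to be non-positive at the rigid configuration.

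First I would test (\ref{eq4.1}) against $u$. Since both $u$ and $G_{L,p}^{-1}$ lie in $H^{2}(M)$ (the latter as noted just before the lemma) and $H^{2}(M)\hookrightarrow C^{0,1/2}(M)$ in dimension $3$, the symmetric bilinear form $E$ pairs them and $u(p)$ makes classical sense. Applying (\ref{eq4.1}) as a distribution to $u$ and using symmetry of $E$ yields
\[
\langle Pu,G_{L,p}^{-1}\rangle \;=\; E(u,G_{L,p}^{-1}) \;=\; -256\pi^{2}\,u(p)+\int_{M}G_{L,p}^{-1}\,\bigl|Rc_{G_{L,p}^{4}g}\bigr|_{g}^{2}\,u\,d\mu .
\]
Since $Pu\le 0$ in distribution sense, $-Pu$ is a non-negative Radon measure, and $G_{L,p}^{-1}$ is a non-negative continuous function vanishing only at $p$; therefore the left-hand side is $\le 0$. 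The right-hand side is $\ge 0$ because $u(p)=0$ kills the Dirac contribution and the integrand is non-negative. Hence both sides vanish, giving two consequences: (a) $|Rc_{G_{L,p}^{4}g}|_{g}^{2}\,u=0$ a.e.\ on $M\setminus\{p\}$, and (b) the non-negative measure $-Pu$ has zero integral against the strictly positive continuous function $G_{L,p}^{-1}$ on $M\setminus\{p\}$, so $\mathrm{supp}(-Pu)\subset\{p\}$ and $Pu=-c\,\delta_{p}$ for some $c\ge 0$.

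Next I would analyze the dichotomy. If $u\equiv 0$, the first alternative holds. Otherwise $u$ is continuous and non-trivial, so $\{u>0\}$ is open and nonempty, and on $M\setminus\{p\}$ the equation $Pu=0$ together with elliptic regularity makes $u$ smooth there; fourth-order unique continuation (as invoked in the proof of Lemma \ref{lem3.1}) then forces $\{u>0\}$ to be dense in the connected manifold $M\setminus\{p\}$. Combined with (a) and continuity, this gives $Rc_{G_{L,p}^{4}g}\equiv 0$ on $M\setminus\{p\}$. Because the stereographic metric $G_{L,p}^{4}g$ is asymptotically flat, and in dimension $3$ Ricci-flat is equivalent to flat, rigidity of asymptotically flat flat 3-manifolds gives $(M\setminus\{p\},G_{L,p}^{4}g)\cong(\mathbb{R}^{3},g_{\mathrm{flat}})$, so $(M,g)$ is conformally diffeomorphic to the standard $S^{3}$. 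Finally, with $Rc_{G_{L,p}^{4}g}\equiv 0$, (\ref{eq4.1}) collapses to $P(G_{L,p}^{-1})=-256\pi^{2}\delta_{p}$, so $P\bigl(u-\tfrac{c}{256\pi^{2}}G_{L,p}^{-1}\bigr)=0$; since the Paneitz operator on the standard $S^{3}$ has trivial kernel (via its explicit spectrum through the factorization into second-order operators), one concludes $u=\tfrac{c}{256\pi^{2}}G_{L,p}^{-1}=-c\,G_{P,p}$, a non-positive constant multiple of $G_{P,p}$.

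The main obstacle I anticipate is making the pairing $\langle Pu,G_{L,p}^{-1}\rangle=-256\pi^{2}u(p)+\int_{M}G_{L,p}^{-1}|Rc_{G_{L,p}^{4}g}|^{2}u\,d\mu$ rigorous when $Pu$ is only a non-positive distribution (hence a Radon measure) and $G_{L,p}^{-1}$ is continuous but not smooth at $p$; the cleanest route is to view both sides as the $H^{2}$-form $E(u,G_{L,p}^{-1})$, which the paper has already set up as defined on $H^{2}(M)\times H^{2}(M)$. The subsequent localization of the measure $-Pu$ to $\{p\}$ and the fourth-order unique continuation across the puncture are the other points requiring care, but each is standard once the identity is in place.
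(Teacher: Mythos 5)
Your argument mirrors the paper's proof step for step: pair (\ref{eq4.1}) against $u$ via the $H^{2}$ form $E$, exploit the sign of $Pu$ and the positivity of $G_{L,p}^{-1}$ to force both $\int_M G_{L,p}^{-1}\,Pu\,d\mu=0$ and $\left\vert Rc_{G_{L,p}^{4}g}\right\vert^{2}u=0$, localize the measure $-Pu$ to $\{p\}$, use unique continuation to get $Rc_{G_{L,p}^{4}g}\equiv 0$, and then invoke the rigidity argument from Lemma \ref{lem3.1} together with $\ker P=0$ on $S^{3}$ to conclude $u=\mathrm{const}\cdot G_{P,p}$. This is the same approach as the paper (the paper reaches the flatness rigidity via Bishop--Gromov relative volume comparison rather than the dimension-3 ``Ricci-flat $\Rightarrow$ flat'' observation you use, but these are interchangeable here).
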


\begin{proof}
Using the fact $G_{L,p}^{-1}\in H^{2}\left( M\right) $, it follows from (\ref%
{eq4.1}) that%
\begin{equation*}
\int_{M}G_{L,p}^{-1}Pud\mu -\int_{M}G_{L,p}^{-1}\left\vert
Rc_{G_{L,p}^{4}g}\right\vert _{g}^{2}ud\mu =0.
\end{equation*}%
Note here%
\begin{equation*}
\int_{M}G_{L,p}^{-1}Pud\mu =E\left( G_{L,p}^{-1},u\right) .
\end{equation*}%
Hence $\int_{M}G_{L,p}^{-1}Pud\mu =0$ and $\int_{M}G_{L,p}^{-1}\left\vert
Rc_{G_{L,p}^{4}g}\right\vert _{g}^{2}ud\mu =0$. Hence $\left\vert
Rc_{G_{L,p}^{4}g}\right\vert _{g}^{2}u=0$. Since $Pu$ must be a measure, we
see $Pu=const\cdot \delta _{p}$. In particular $u$ is smooth on $M\backslash
\left\{ p\right\} $. If $u$ is not identically zero, it follows from unique
continuation property that the set $\left\{ u\neq 0\right\} $ is dense, and
hence $Rc_{G_{L,p}^{4}g}=0$. Same argument as in the proof of Lemma \ref%
{lem3.1} tells us $\left( M,g\right) $ must be conformal diffeomorphic to
the standard $S^{3}$, and hence $u=const\cdot G_{P,p}$.
\end{proof}

\begin{proposition}
\label{prop4.1}Assume $Y\left( g\right) >0$, $Q\geq 0$. If $u\in C^{\infty
}\left( M\right) $ such that $Pu\leq 0$ and $u$ is not identically constant,
then $u>0$.
\end{proposition}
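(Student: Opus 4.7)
The plan is to mirror the proof of Proposition~\ref{prop3.1} closely, using Lemma~\ref{lem4.1} in place of Lemma~\ref{lem3.1} and keeping careful track of the sign flip that comes from the dimensional coefficient $\tfrac{n-4}{2}$ being negative when $n=3$.

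First, I would argue by contradiction. Suppose $u$ is not strictly positive. Since $u \in C^\infty(M)$ and $M$ is compact, $u$ attains a non-positive minimum at some point $p \in M$, i.e.\ $u(p) = \min_M u \le 0$. Set $\lambda = -u(p) \ge 0$ and form $v = u + \lambda \in C^\infty(M)$. Then $v \ge 0$ on $M$ and $v(p) = 0$. Using the definition of $P$ (which sends the constant $\lambda$ to $\tfrac{n-4}{2}\lambda Q = -\tfrac12 \lambda Q$ in dimension $3$) together with $Pu \le 0$, $Q \ge 0$, and $\lambda \ge 0$, we obtain
\[
Pv \;=\; Pu + \frac{n-4}{2}\,\lambda\, Q \;=\; Pu - \frac{1}{2}\lambda Q \;\le\; 0.
\]

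Next I would apply Lemma~\ref{lem4.1} to $v$ (which is smooth, hence certainly in $H^2(M)$): the conclusion is that either $v \equiv 0$, or $(M,g)$ is conformally diffeomorphic to the standard $S^3$ and $v = c\, G_{P,p}$ for some constant $c$. In the first case, $u \equiv -\lambda$ is constant, contradicting the hypothesis on $u$. In the second case, $v$ is smooth at $p$ while $G_{P,p}$ has a singularity at $p$, so $c$ must vanish; hence again $v \equiv 0$ and we reach the same contradiction. This forces $u > 0$ everywhere.

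The only subtle point I expect is verifying that a smooth $v = c\, G_{P,p}$ indeed forces $c = 0$, which reduces to observing that the Green's function $G_{P,p}$ of $P$ on the standard $S^3$ is not a smooth function near its pole (its leading asymptotic as $q \to p$ agrees up to constants with $G_{L,p}^{-1}$, as suggested by formula~(\ref{eq4.1})). Beyond that, everything reduces to the same minimum-principle template used in Proposition~\ref{prop3.1}, so no significant new calculation is needed.
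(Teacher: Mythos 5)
Your proof is correct and mirrors the paper's argument step for step: argue by contradiction, translate $u$ by its nonpositive minimum to obtain $v\geq 0$ with $v(p)=0$ and $Pv\leq 0$, then invoke Lemma~\ref{lem4.1}. You go slightly further than the paper in two minor ways: you carry the dimensional coefficient $\tfrac{n-4}{2}=-\tfrac12$ correctly (the paper's display writes $Pu-\lambda Q$ rather than $Pu-\tfrac12\lambda Q$, an immaterial slip since the sign conclusion is the same), and you explicitly rule out the second alternative in Lemma~\ref{lem4.1} by observing that a smooth $v$ cannot equal a nonzero multiple of $G_{P,p}$, which in dimension $3$ is continuous but not $C^1$ at its pole (its leading local behavior is a multiple of $r=|x|$) -- a step the paper's proof passes over silently in asserting ``$u+\lambda\equiv 0$''.
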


\begin{proof}
If the conclusion of the proposition is false, then $u\left( p\right)
=\min_{M}u\leq 0$ for some $p$. Let $\lambda =-u\left( p\right) \geq 0$,
then $u+\lambda \geq 0$, $u\left( p\right) +\lambda =0$ and%
\begin{equation*}
P\left( u+\lambda \right) =Pu-\lambda Q\leq 0.
\end{equation*}%
It follows from the Lemma \ref{lem4.1} that $u+\lambda \equiv 0$. This
contradicts with the fact $u$ is not a constant function.
\end{proof}

\begin{corollary}
\label{cor4.1}Assume $Y\left( g\right) >0$, $Q\geq 0$, then $\ker P\subset
\left\{ \text{constant functions}\right\} $. If in addition, $Q$ is not
identically zero, then $\ker P=0$ i.e. $0$ is not an eigenvalue of $P$.
\end{corollary}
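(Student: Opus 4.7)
The argument mirrors that of Corollary \ref{cor3.1}, with Proposition \ref{prop4.1} playing the role of Proposition \ref{prop3.1}. Suppose $u \in \ker P$, i.e.\ $Pu = 0$. Then both $Pu \leq 0$ and $P(-u) \leq 0$ hold. If $u$ were not identically constant, Proposition \ref{prop4.1} would force $u > 0$ and $-u > 0$ simultaneously, which is impossible. Hence every element of $\ker P$ is constant.

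For the second statement, assume in addition that $Q$ is not identically zero, and let $u \equiv c$ be a constant with $Pu = 0$. Since $P$ applied to a constant yields $\frac{n-4}{2}Q u = -\frac{1}{2}Qc$ in dimension $n=3$, the equation $Pu = 0$ becomes $-\frac{1}{2}Qc \equiv 0$ on $M$. Because $Q$ is not identically zero, this forces $c = 0$, so $u \equiv 0$ and $\ker P = 0$.

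The only conceptual step is the reduction $Pu = 0 \Longrightarrow Pu \leq 0$ and $P(-u) \leq 0$, which hands the work off to Proposition \ref{prop4.1}; no additional obstacle arises.
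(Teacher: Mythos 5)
Your proof is correct and takes essentially the same approach as the paper for the main claim: apply Proposition \ref{prop4.1} to both $u$ and $-u$ to force non-constant kernel elements into the contradiction $u>0$ and $-u>0$. You also spell out explicitly the easy second step (a nonzero constant $c$ would give $-\tfrac{1}{2}Qc \equiv 0$, impossible when $Q \not\equiv 0$), which the paper leaves implicit; that is a harmless and correct addition.
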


\begin{proof}
Assume $Pu=0$. If $u$ is not a constant function, then it follows from
Proposition \ref{prop4.1} that $u>0$ and $-u>0$, a contradiction.
\end{proof}

\begin{lemma}
\label{lem4.2}Assume $Y\left( g\right) >0$, $Q\geq 0$ and not identically
zero, then $\ker P=0$, and the Green's function $G_{P,p}\left( q\right)
=G_{P}\left( p,q\right) <0$ for $p\neq q$. Moreover if for some $p\in M$, $%
G_{P,p}\left( p\right) =0$, then $\left( M,g\right) $ is conformal
diffeomorphic to the standard $S^{3}$.
\end{lemma}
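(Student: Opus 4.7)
The plan is to mirror the argument of \lemref{lem3.2} while using \lemref{lem4.1} and Proposition \ref{prop4.1} in place of their higher-dimensional analogues; the fact that $c_3<0$ is what flips the sign of the conclusion. First, by Corollary \ref{cor4.1}, $\ker P_g=0$, so the Green's function $G_P$ exists and every $f\in C^{\infty}(M)$ determines a unique solution $u$ of $Pu=f$ with
\[
u(p)=\int_M G_P(p,q)\,f(q)\,d\mu(q).
\]

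The first main step is to prove $G_{P,p}(q)\le 0$ for $q\ne p$. Given $f\le 0$, I would show $u\ge 0$: by Proposition \ref{prop4.1}, either $u$ is constant or $u>0$; if $u\equiv c$ then $f=Pu=-\tfrac12 cQ$, and since $Q\ge 0$ is not identically zero, $f\le 0$ forces $c\ge 0$. Letting $f$ range over non-positive bump functions and using the continuity of $G_{P,p}$ away from $p$ yields $G_{P,p}\le 0$ pointwise on $M\setminus\{p\}$.

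To upgrade to strict inequality off the diagonal, suppose $G_{P,p}(q_0)=0$ for some $q_0\ne p$ and set $v:=-G_{P,p}$. Then $v\ge 0$, $v(q_0)=0$, $v\in H^2(M)$ (in dimension $3$ the Paneitz fundamental solution behaves like $c\,d(\cdot,p)$ near its pole, which is Lipschitz with Hessian in $L^2_{\mathrm{loc}}$), and $Pv=-\delta_p\le 0$ in the distributional sense. \lemref{lem4.1} then yields either $v\equiv 0$ or $(M,g)$ is conformal to the standard $S^3$ with $v$ a constant multiple of $G_{P,q_0}$. The first case contradicts $Pv=-\delta_p\ne 0$; the second gives $-G_{P,p}=c\,G_{P,q_0}$, and applying $P$ to both sides yields $-\delta_p=c\,\delta_{q_0}$, impossible since $p\ne q_0$. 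Hence $G_{P,p}(q)<0$ for all $q\ne p$.

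Finally, suppose $G_{P,p}(p)=0$ at some $p\in M$. Applying \lemref{lem4.1} to $v=-G_{P,p}$ with vanishing point $p$ (again $v\not\equiv 0$ since $Pv=-\delta_p\ne 0$) forces $(M,g)$ to be conformal diffeomorphic to the standard $S^3$. The main technical point is verifying $G_{P,p}\in H^2(M)$ and that its pointwise values at the relevant points are well-defined; both follow from the $C^{0,1}$ asymptotics of the Paneitz Green's function in dimension $3$ combined with the Sobolev embedding $H^2(M)\hookrightarrow C^0(M)$, so no serious obstacle remains beyond these routine verifications.
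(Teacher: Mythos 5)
Your proof is correct and takes essentially the same route as the paper's: establish $\ker P = 0$ via Corollary \ref{cor4.1}, derive $G_{P,p}\le 0$ from Proposition \ref{prop4.1} applied to solutions of $Pu=f$ with $f\le 0$, and then apply Lemma \ref{lem4.1} to $v=-G_{P,p}$ (using $Pv=-\delta_p\le 0$) to upgrade to strict negativity off the diagonal and to handle the case $G_{P,p}(p)=0$. You are somewhat more careful than the paper in spelling out the constant-solution case when invoking Proposition \ref{prop4.1}, in verifying $G_{P,p}\in H^2(M)$ with its $C^{0,1}$ behaviour near the pole, and in ruling out the $v\equiv 0$ alternative of Lemma \ref{lem4.1}; these are details the paper leaves implicit, so your treatment is a cleaner rendering of the same argument.
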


\begin{proof}
By Corollary \ref{cor4.1}, we know $\ker P=0$. Hence for any $f\in C^{\infty
}\left( M\right) $, there exists a unique $u\in C^{\infty }\left( M\right) $
with $Pu=f$, moreover%
\begin{equation*}
u\left( p\right) =\int_{M}G_{P,p}\left( q\right) f\left( q\right) d\mu
\left( q\right) .
\end{equation*}%
If $f\leq 0$, it follows from the Proposition \ref{prop4.1} that $u\geq 0$.
Hence $G_{P,p}\leq 0$. If $G_{P,p}\left( q\right) =0$ for some $q$, since $%
PG_{P,p}=\delta _{p}\geq 0$, it follows from Lemma \ref{lem4.1} that $\left(
M,g\right) $ must be conformal diffeomorphic to the standard $S^{3}$ and $%
G_{P,p}$ is a constant multiple of $G_{P,q}$, this implies $p=q$. Hence $%
G_{P,p}<0$ on $M\backslash \left\{ p\right\} $.
\end{proof}

Now we are ready to prove Theorem \ref{thm1.2}.

\begin{proof}[Proof of Theorem \protect\ref{thm1.2}]
(1)$\Rightarrow $(2): This follows from Lemma \ref{lem4.2} and (\ref{eq1.5}%
), (\ref{eq1.6}).

(2)$\Rightarrow $(1): This follows from Krein-Rutman theorem, or one may use
the argument in the proof of Theorem \ref{thm1.1}. We also remark it follows
that the largest \textbf{negative }eigenvalue of $P$ must be simple and its
eigenfunction must be strictly positive or strictly negative. Moreover if $%
\lambda $ is a positive eigenvalue of $P$, then $\lambda $ is strictly
bigger than the absolute value of the largest negative eigenvalue.

(3)$\Rightarrow $(2): We can assume $\left( M,g\right) $ is not conformal
diffeomorphic to the standard $S^{3}$. For any $p\in M$, we let%
\begin{equation*}
\Theta \left( p\right) =\max_{M}G_{P,p}.
\end{equation*}%
Then it follows from Lemma \ref{lem4.1} that $\Theta \left( p\right) \neq 0$
for any $p\in M$. Since $\Theta \left( p_{0}\right) <0$ for some $p_{0}\in M$%
, we see $\Theta \left( p\right) <0$ for all $p\in M$. In another word, $%
G_{P}<0$.
\end{proof}

With all the above analysis, we can easily deduce Proposition \ref{prop1.2}.

\begin{proof}[Proof of Proposition \protect\ref{prop1.2}]
Under the assumption of Proposition \ref{prop1.2}, it follows from Lemma \ref%
{lem4.2} that $\ker P=0$ and $G_{P}\left( p,q\right) <0$ for $p\neq q$. From
(\ref{eq4.1}) we see%
\begin{equation*}
P\left( G_{L,p}^{-1}+256\pi ^{2}G_{P,p}\right) =G_{L,p}^{-1}\left\vert
Rc_{G_{L,p}^{4}g}\right\vert _{g}^{2}\geq 0.
\end{equation*}%
Hence $G_{L,p}^{-1}+256\pi ^{2}G_{P,p}\leq 0$. If it achieves $0$ somewhere,
then $Rc_{G_{L,p}^{4}g}=0$ and hence $\left( M,g\right) $ is conformal
diffeomorphic to the standard $S^{3}$.
\end{proof}

At last we want to point out based on Proposition \ref{prop1.2}, using the
arguments in \cite{HY3} we have the following statement: Let%
\begin{equation*}
\mathcal{M}=\left\{ g:%
\begin{tabular}{l}
$g$ is a smooth metric with $Y\left( g\right) >0$ and there exists \\ 
a positive smooth function $\rho $ such that $Q_{\rho ^{2}g}>0$%
\end{tabular}%
\right\}
\end{equation*}%
be endowed with $C^{\infty }$ topology. Then

\begin{enumerate}
\item For every $g\in \mathcal{M}$, there exists $\rho \in C^{\infty }\left(
M\right) $, $\rho >0$ such that $Q_{\rho ^{2}g}=1$. Moreover as long as $%
\left( M,g\right) $ is not conformal diffeomorphic to the standard $S^{3}$,
the set%
\begin{equation*}
\left\{ \rho \in C^{\infty }\left( M\right) :\rho >0,Q_{\rho ^{2}g}=1\right\}
\end{equation*}%
is compact in $C^{\infty }$ topology.

\item Let $\mathcal{N}$ be a path connected component of $\mathcal{M}$. If
there is a metric in $\mathcal{N}$ satisfying condition NN, then every
metric in $\mathcal{N}$ satisfies condition NN. Hence as long as the metric
is not conformal to the standard $S^{3}$, it satisfies condition $P$. As a
consequence, for any metric in $\mathcal{N}$,%
\begin{equation*}
\inf \left\{ E\left( u\right) \left\Vert u^{-1}\right\Vert _{L^{6}\left(
M\right) }^{2}:u\in H^{2}\left( M\right) ,u>0\right\} >-\infty
\end{equation*}%
and is always achieved.
\end{enumerate}

We omit the details here.

\section{$4$ dimension case revisited\label{sec5}}

Throughout this section we will assume $\left( M,g\right) $ is a smooth
compact Riemannian manifold of dimension $4$. In this dimension the $Q$
curvature is written as%
\begin{equation}
Q=-\frac{1}{6}\Delta R-\frac{1}{2}\left\vert Rc\right\vert ^{2}+\frac{1}{6}%
R^{2}.  \label{eq5.1}
\end{equation}%
The Paneitz operator can be written as%
\begin{equation}
P\varphi =\Delta ^{2}\varphi +2\func{div}\left( Rc\left( \nabla \varphi
,e_{i}\right) e_{i}\right) -\frac{2}{3}\func{div}\left( R\nabla \varphi
\right) .  \label{eq5.2}
\end{equation}%
Here $e_{1},e_{2},e_{3},e_{4}$ is a local orthonormal frame with respect to $%
g$. $P$ satisfies%
\begin{equation}
P_{e^{2w}g}\varphi =e^{-4w}P_{g}\varphi  \label{eq5.3}
\end{equation}%
for any smooth function $w$. The $Q$ curvature transforms as%
\begin{equation}
Q_{e^{2w}g}=e^{-4w}\left( P_{g}w+Q_{g}\right) .  \label{eq5.4}
\end{equation}%
In the spirit of Proposition \ref{prop2.1}, we have

\begin{proposition}
\label{prop5.1}Assume $\left( M,g\right) $ is a $4$ dimensional smooth
compact Riemannian manifold with $Y\left( g\right) >0$, $p\in M$, then we
have $\left\vert Rc_{G_{L,p}^{2}g}\right\vert _{g}^{2}\in L^{1}\left(
M\right) $ and%
\begin{equation}
P\left( \log G_{L,p}\right) =16\pi ^{2}\delta _{p}-\frac{1}{2}\left\vert
Rc_{G_{L,p}^{2}g}\right\vert _{g}^{2}-Q  \label{eq5.5}
\end{equation}%
in distribution sense. Here $G_{L,p}$ is the Green's function of conformal
Laplacian operator $L=-6\Delta +R$ with pole at $p$.
\end{proposition}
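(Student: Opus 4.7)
The plan is to mimic the proof of Proposition~\ref{prop2.1}, replacing the singular exponent $G_{L,p}^{(n-4)/(n-2)}$ by its formal dimension-$4$ analogue $\log G_{L,p}$ and the transformation law (\ref{eq1.3}) by its dimension-$4$ counterpart (\ref{eq5.3})--(\ref{eq5.4}). The identity should be understood by testing against $\phi\in C^\infty(M)$, and the key is to separate a classical identity on $M\setminus\{p\}$ from a boundary contribution at $p$.

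First I would derive the pointwise identity on $M\setminus\{p\}$. Since $L_g G_{L,p}=0$ there, the metric $\tilde g=G_{L,p}^{2}g$ has $R_{\tilde g}=0$, so (\ref{eq5.1}) gives $Q_{\tilde g}=-\tfrac12|Rc_{\tilde g}|_{\tilde g}^{2}$. Applying (\ref{eq5.4}) with $w=\log G_{L,p}$ yields $Q_{\tilde g}=G_{L,p}^{-4}(P_g\log G_{L,p}+Q_g)$, and using $|Rc_{\tilde g}|_g^{2}=G_{L,p}^{4}|Rc_{\tilde g}|_{\tilde g}^{2}$ for a $(0,2)$-tensor gives, on $M\setminus\{p\}$,
\begin{equation*}
P_g(\log G_{L,p})=-\tfrac12|Rc_{\tilde g}|_g^{2}-Q_g.
\end{equation*}

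Next I would show $|Rc_{\tilde g}|_g^{2}\in L^{1}(M)$. Working in conformal normal coordinates at $p$ (for which I may assume $\exp_p$ preserves volume near $p$), we have $\log G_{L,p}=-2\log r+O(1)$ near $p$. Substituting into the conformal transformation formula (\ref{eq2.5}) with $n=4$, a direct calculation shows that the four $O(r^{-2})$ leading singular terms for $Rc_{\tilde g}$ cancel exactly, leaving a milder singularity that is square-integrable. Alternatively, $(M\setminus\{p\},\tilde g)$ is asymptotically flat with vanishing scalar curvature, and the identity $\int_M|Rc_{\tilde g}|_g^{2}d\mu_g=\int_{M\setminus\{p\}}|Rc_{\tilde g}|_{\tilde g}^{2}d\mu_{\tilde g}$ reduces integrability to the standard decay estimates at the asymptotically flat end.

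For the distributional identity, I would compute $\int_{M\setminus B_\delta(p)}\log G_{L,p}\cdot P\phi\, d\mu_g$ and apply Green's identity for the biharmonic on the annular domain. The lower-order divergence pieces of $P$ and the contributions of the smooth remainder in $\log G_{L,p}$ yield boundary integrals on $\partial B_\delta(p)$ that are $O(\delta^a)$ for some $a>0$ and vanish as $\delta\to 0$. Of the four boundary terms in the $\Delta^{2}$ piece of Green's formula, only $\int_{\partial B_\delta}\phi\cdot(-\partial_\nu\Delta\log G_{L,p})\,dS$ survives: using $\Delta\log r=2/r^{2}$ in dimension $4$ and $|S^{3}|=2\pi^{2}$, the leading singularity $\log G_{L,p}\sim-2\log r$ gives $\partial_r\Delta(\log G_{L,p})\sim 8/r^{3}$ and the integral tends to $16\pi^{2}\phi(p)$. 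Combining with the pointwise identity above and the integrability from the previous step, this gives exactly (\ref{eq5.5}).

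I expect the main obstacle to be the cancellation argument establishing $|Rc_{\tilde g}|_g^{2}\in L^{1}$. A direct estimate from (\ref{eq2.5}) only gives $O(r^{-4})$, which is precisely on the non-integrable borderline in dimension $4$; one must verify that the Hessian, squared differential, Laplacian, and squared-gradient contributions combine so that the leading $r^{-2}$ singularities in $Rc_{\tilde g}$ cancel (equivalently, exploit that $\tilde g$ is asymptotically Euclidean with $R_{\tilde g}=0$). Once this is in place, the boundary-term computation is essentially the Euclidean fundamental-solution calculation $\Delta^{2}(-2\log r)=16\pi^{2}\delta_0$ in $\mathbb R^{4}$, transplanted to $M$.
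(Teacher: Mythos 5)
Your proposal is correct and follows essentially the same route as the paper: derive the pointwise identity $P_g(\log G_{L,p})=-\tfrac12|Rc_{\tilde g}|_g^2-Q_g$ on $M\setminus\{p\}$ from $R_{\tilde g}=0$ and the dimension-$4$ transformation law (\ref{eq5.4}), verify the cancellation of the leading $O(r^{-2})$ terms in $Rc_{\tilde g}$ in conformal normal coordinates (the paper records the result of this as $|Rc_{G_{L,p}^2 g}|_g=O(1)$), and extract the $16\pi^2\delta_p$ from the Euclidean computation $\Delta^2(-2\log r)=16\pi^2\delta_0$. The only point you leave implicit, which the paper spells out, is that both the $L^1$ claim and the distributional identity (\ref{eq5.9}) are conformally invariant, which is what licenses the reduction to conformal normal coordinates.
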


\begin{proof}
If $\rho $ is a positive smooth function on $M$, $\widetilde{g}=\rho ^{2}g$,
then%
\begin{equation}
\left\vert Rc_{G_{\widetilde{L},p}^{2}\widetilde{g}}\right\vert _{\widetilde{%
g}}^{2}d\widetilde{\mu }=\left\vert Rc_{G_{L,p}^{2}g}\right\vert
_{g}^{2}d\mu .  \label{eq5.6}
\end{equation}%
Hence to show $\left\vert Rc_{G_{L,p}^{2}g}\right\vert _{g}^{2}\in
L^{1}\left( M\right) $, in view of the existence of conformal normal
coordinate, we can assume $\exp _{p}$ preserves volume near $p$. Let $%
x_{1},x_{2},x_{3},x_{4}$ be normal coordinate at $p$, $r=\left\vert
x\right\vert $, then (see \cite{LP})%
\begin{equation}
G_{L,p}=\frac{1}{24\pi ^{2}}\frac{1}{r^{2}}\left( 1+O^{\left( 4\right)
}\left( r^{2}\right) \right) .  \label{eq5.7}
\end{equation}%
Using%
\begin{eqnarray}
Rc_{G_{L,p}^{2}g} &=&Rc-2D^{2}\log G_{L,p}+2d\log G_{L,p}\otimes d\log
G_{L,p}  \label{eq5.8} \\
&&-\left( \Delta \log G_{L,p}+2\left\vert \nabla \log G_{L,p}\right\vert
^{2}\right) g,  \notag
\end{eqnarray}%
we see $\left\vert Rc_{G_{L,p}^{2}g}\right\vert _{g}=O\left( 1\right) $,
hence $\left\vert Rc_{G_{L,p}^{2}g}\right\vert _{g}^{2}\in L^{1}\left(
M\right) $.

On the other hand, (\ref{eq5.5}) means%
\begin{equation}
\int_{M}\log G_{L,p}\cdot P\varphi d\mu =16\pi ^{2}\varphi \left( p\right) -%
\frac{1}{2}\int_{M}\left\vert Rc_{G_{L,p}^{2}g}\right\vert _{g}^{2}\varphi
d\mu -\int_{M}Q\varphi d\mu .  \label{eq5.9}
\end{equation}%
Careful check shows (\ref{eq5.9}) is conformally invariant. Hence we can
assume $\exp _{p}$ preserves volume near $p$. It follows from (\ref{eq5.7})
that on $B_{\delta }\left( p\right) $ for $\delta >0$ small,%
\begin{equation}
P\left( \log G_{L,p}\right) =16\pi ^{2}\delta _{p}+\text{a }L^{1}\text{
function}  \label{eq5.10}
\end{equation}%
in distribution sense. On $M\backslash \left\{ p\right\} $, we have%
\begin{equation}
P\left( \log G_{L,p}\right) =G_{L,p}^{4}Q_{G_{L,p}^{2}g}-Q=-\frac{1}{2}%
\left\vert Rc_{G_{L,p}^{2}g}\right\vert _{g}^{2}-Q.  \label{eq5.11}
\end{equation}%
(\ref{eq5.5}) follows.
\end{proof}

By integrating (\ref{eq5.5}) on $M$ and observing that%
\begin{equation*}
\left\vert Rc_{G_{L,p}^{2}g}\right\vert _{g}^{2}d\mu _{g}=\left\vert
Rc_{G_{L,p}^{2}g}\right\vert _{G_{L,p}^{2}g}^{2}d\mu _{G_{L,p}^{2}g}
\end{equation*}%
we immediately get

\begin{corollary}
\label{cor5.1}Assume $Y\left( g\right) >0$, then for any $p\in M$,%
\begin{equation}
\int_{M}Qd\mu +\frac{1}{2}\int_{M}\left\vert Rc_{G_{L,p}^{2}g}\right\vert
_{G_{L,p}^{2}g}^{2}d\mu _{G_{L,p}^{2}g}=16\pi ^{2}.  \label{eq5.12}
\end{equation}%
In particular, $\int_{M}Qd\mu \leq 16\pi ^{2}$ and equality holds if and
only if $\left( M,g\right) $ is conformal diffeomorphic to the standard $%
S^{4}$.
\end{corollary}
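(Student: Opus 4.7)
The plan is to obtain the identity (5.12) by integrating the distributional formula (5.5) of Proposition~\ref{prop5.1} against the constant test function $\varphi\equiv 1$. In dimension $4$, the Paneitz operator given by (5.2) is in divergence form with no zeroth order term, so $P(1)=0$. By the integration-by-parts / self-adjointness interpretation of (5.5) as stated in (5.9), pairing with $\varphi=1$ gives
\[
0=\int_{M}\log G_{L,p}\cdot P(1)\,d\mu=16\pi^{2}-\frac{1}{2}\int_{M}\left\vert Rc_{G_{L,p}^{2}g}\right\vert _{g}^{2}d\mu-\int_{M}Q\,d\mu,
\]
which is exactly the desired equality after converting $|Rc_{G_{L,p}^{2}g}|_{g}^{2}d\mu_{g}$ to $|Rc_{G_{L,p}^{2}g}|_{G_{L,p}^{2}g}^{2}d\mu_{G_{L,p}^{2}g}$ using the dimension-$4$ conformal invariance $|T|_{\rho^{2}g}^{2}d\mu_{\rho^{2}g}=|T|_{g}^{2}d\mu_{g}$ for $(0,2)$-tensors, which was already recorded in the remark preceding the corollary.

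The inequality $\int_{M}Q\,d\mu\leq 16\pi^{2}$ follows immediately from non-negativity of $|Rc_{G_{L,p}^{2}g}|^{2}$. For the equality case, if equality holds, then the identity forces $Rc_{G_{L,p}^{2}g}\equiv 0$ on $M\setminus\{p\}$. At this point I would invoke the same asymptotic-flatness argument used in the proof of Lemma~\ref{lem3.1}: since $R_{G_{L,p}^{2}g}=0$, the metric $G_{L,p}^{2}g$ on $M\setminus\{p\}$ is the stereographic projection, hence asymptotically flat; being Ricci-flat and asymptotically flat, relative volume comparison forces $(M\setminus\{p\},G_{L,p}^{2}g)$ to be isometric to Euclidean $\mathbb{R}^{4}$. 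Consequently $(M,g)$ is a simply connected, compact, locally conformally flat manifold, which by Kuiper's theorem is conformally diffeomorphic to the round $S^{4}$. The converse is trivial: on the round sphere, explicit computation (or symmetry) gives $Rc_{G_{L,p}^{2}g}=0$ and $\int_{S^{4}}Q\,d\mu=16\pi^{2}$.

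The main technical point I would want to be careful about is the first step: justifying that pairing the distributional identity (5.5) with $\varphi\equiv 1$ really produces $0$ on the left hand side. Because $\log G_{L,p}$ blows up at $p$ only logarithmically and $P$ is fourth order in divergence form, this amounts to integration by parts with an $L^{1}_{\text{loc}}$ singular function against a smooth test function. The clean way is to apply (5.9), which the authors have already verified is conformally invariant and holds for all $\varphi\in C^{\infty}(M)$; taking $\varphi=1$ makes the left hand side vanish because $P$ annihilates constants in dimension $4$. Once this is accepted, the rest of the argument is short: algebraic rearrangement plus the rigidity analysis above. The only genuine content is therefore the rigidity, which however is a direct reuse of machinery already set up in Section~\ref{sec3}.
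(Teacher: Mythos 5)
Your proof is correct and follows essentially the same route as the paper: the paper's phrase ``integrating (5.5) on $M$'' is precisely your step of pairing the distributional identity (equivalently (5.9)) with $\varphi\equiv 1$ and using $P(1)=0$ in dimension $4$, followed by the same dimension-$4$ conformal invariance of $|T|^2\,d\mu$ for $(0,2)$-tensors. You spell out the rigidity case (asymptotic flatness plus Ricci-flatness of $G_{L,p}^{2}g$ implies Euclidean, hence conformal to $S^{4}$ via Kuiper) which the paper leaves implicit as a reuse of the Lemma~\ref{lem3.1} argument, but the content is the same.
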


\section{Positive mass theorem for Paneitz operator revisited\label{sec6}}

Throughout this section we will assume $\left( M,g\right) $ is a smooth
compact Riemannian manifold with dimension $n>4$.

In \cite{HuR}, for locally conformally flat manifold with $Y\left( g\right)
>0$ and positive Green's function $G_{P}$, a positive mass theorem for
Paneitz operator was proved by a nice calculation. Note that this result
plays similar role for $Q$ curvature equation as the classical positive mass
theorem for the Yamabe problem (\cite{LP}). It was observed that similar
calculation works for $n=5,6,7$ in \cite{GM} and for $n=3$ in \cite{HY3}.
Since the case $n=3$ can be covered by Lemma \ref{lem4.1}, we concentrate on
the case $n>4$. The main aim of this section is to show the positive mass
theorem for Paneitz operator follows from the formula (\ref{eq2.1}).

\begin{lemma}
\label{lem6.1}Assume $n>4$, $Y\left( g\right) >0$, $\ker P=0$. Let $%
x_{1},\cdots ,x_{n}$ be a coordinate near $p$ with $x_{i}\left( p\right) =0$%
, $r=\left\vert x\right\vert $. If either $M$ is conformally flat near $p$
or $n=5,6,7$, then%
\begin{equation}
c_{n}G_{P,p}-G_{L,p}^{\frac{n-4}{n-2}}=\text{const}+O^{\left( 4\right)
}\left( r\right) .  \label{eq6.1}
\end{equation}%
Here $c_{n}$ is the constant given by (\ref{eq1.13}).
\end{lemma}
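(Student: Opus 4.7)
The strategy is to read (\ref{eq6.1}) as a statement of elliptic regularity for
\[
u \;:=\; c_{n}G_{P,p} - G_{L,p}^{(n-4)/(n-2)},
\]
exploiting Proposition~\ref{prop2.1}. Since $PG_{P,p}=\delta_{p}$, the two Dirac contributions cancel and we obtain
\[
Pu \;=\; \frac{n-4}{(n-2)^{2}}\,G_{L,p}^{(n-4)/(n-2)}\bigl|Rc_{\hat g}\bigr|_{g}^{2} \;=:\; f, \qquad \hat g := G_{L,p}^{4/(n-2)}g,
\]
in the distributional sense on $M$. Both $c_{n}G_{P,p}$ and $G_{L,p}^{(n-4)/(n-2)}$ share the same leading $r^{4-n}$ singularity, so $u=O(r^{5-n})$ and in particular $u\in L^{1}_{\mathrm{loc}}$ near $p$. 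Moreover, combining (\ref{eq1.6}) with the corresponding transformation law for $G_{L}$, one checks that under $\tilde g=\sigma^{4/(n-4)}g$ the combination $c_{n}G_{P}-G_{L}^{(n-4)/(n-2)}$ transforms simply by the smooth positive factor $\sigma(p)^{-1}\sigma(q)^{-1}$, so the assertion (\ref{eq6.1}) is conformally invariant. We may therefore pass to conformal normal coordinates at $p$, so that $\exp_{p}$ preserves volume and (\ref{eq2.4}) holds.

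\textbf{Conformally flat case.} After absorbing the local conformal flatness into a further conformal change, assume $g$ is Euclidean on a neighbourhood $U$ of $p$. The metric $\hat g$ is then, by stereographic inversion at $p$, isometric on $U\setminus\{p\}$ to an open subset of flat $\mathbb{R}^{n}$, hence $Rc_{\hat g}\equiv 0$ on $U\setminus\{p\}$ and $f\equiv 0$ on $U$. Thus $Pu=0$ distributionally on $U$; since $u\in L^{1}_{\mathrm{loc}}(U)$ and $P$ is smooth fourth-order elliptic, standard elliptic regularity gives $u\in C^{\infty}(U)$, and a Taylor expansion at $p$ yields (\ref{eq6.1}) in the stronger form $u=u(p)+O^{(\infty)}(r)$.

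\textbf{Case $n\in\{5,6,7\}$.} In conformal normal coordinates, push (\ref{eq2.4}) one or two orders further and expand $Rc_{\hat g}$ via (\ref{eq2.5}). The vanishing of $R(p)$, $\nabla R(p)$, and the symmetrised traces of covariant derivatives of $Rc$ at $p$ (consequences of $\det g=1+O(r^{N})$ in these coordinates) cancel the leading singular contributions to $|Rc_{\hat g}|_{g}^{2}$, refining the crude bound (\ref{eq2.6}) enough that the dimensional restriction $n\leq 7$ places the surviving $f$ in $L^{q}_{\mathrm{loc}}$ near $p$ for some $q>n/4$. Interior $W^{4,q}$ and Schauder estimates for $Pu=f$, applied on dyadic annuli $\{r\sim 2^{-k}\delta\}$ with rescaling, then produce the pointwise derivative bounds $\partial^{k}u=O(r^{1-k})$ for $0\leq k\leq 4$, which is precisely the assertion $u=\mathrm{const}+O^{(4)}(r)$.

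\textbf{Main obstacle.} The technical heart is the cancellation analysis of $|Rc_{\hat g}|_{g}^{2}\,G_{L,p}^{(n-4)/(n-2)}$ in conformal normal coordinates for $n=5,6,7$, which is what secures $f\in L^{q}_{\mathrm{loc}}$ for some $q>n/4$. This is exactly where the dimensional constraint enters: for $n\geq 8$ outside the locally conformally flat case, borderline or logarithmic contributions survive and obstruct the expansion. Once the integrability of $f$ is in hand, the remaining derivation of (\ref{eq6.1}) by elliptic bootstrap is routine.
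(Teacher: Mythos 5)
Your approach is genuinely different from the paper's. The paper reduces to the locally Euclidean (or conformal normal coordinate) picture by conformal invariance and then simply quotes the known asymptotic expansions of $G_{P,p}$ and $G_{L,p}$ from the literature, i.e., \cite{GM,LP}, after which (\ref{eq6.1}) is a one-line subtraction. You instead try to \emph{derive} the regularity of $u := c_n G_{P,p} - G_{L,p}^{(n-4)/(n-2)}$ from the identity $Pu = \tfrac{n-4}{(n-2)^2} G_{L,p}^{(n-4)/(n-2)} |Rc_{\hat g}|_g^2$ by elliptic bootstrap. This is a reasonable idea, but two things go wrong.

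\textbf{The flatness claim in the conformally flat case is false.} When $g$ is Euclidean on $U$, the Green's function is $G_{L,p} = \tfrac{1}{4n(n-1)\omega_n} r^{2-n} + h$ with $h$ a (generically nonconstant) harmonic function determined by the global geometry of $M$; only the pure power $r^{2-n}$ is absorbed by the inversion. Consequently $\hat g = G_{L,p}^{4/(n-2)}\delta$ is merely asymptotically flat near $p$, not flat: it is the conformally flat end one obtains by blowing up at $p$, and its Ricci tensor vanishes identically only when $(M,g)$ is the round sphere. What is true, and what one must actually estimate, is $|Rc_{\hat g}|_g = O^{(m)}(r^{n-4})$ (coming from the cross-terms between $(2-n)\log r$ and $\log(1 + (h/c)r^{n-2})$ in (\ref{eq2.5})), giving $f = O^{(m)}(r^{n-4})$, which is enough for the bootstrap. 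So the correct conclusion is salvageable, but not by the argument you wrote; as stated, $Rc_{\hat g} \equiv 0$ and $f \equiv 0$ on $U$ are simply incorrect.

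\textbf{The $n=5,6,7$ case is not carried out.} You acknowledge this as the technical heart, but the burden of the lemma in exactly this case is to establish, in conformal normal coordinates, precise vanishing of the low-order jets of the curvature at $p$ so that $G_{L,p}^{(n-4)/(n-2)}|Rc_{\hat g}|_g^2$ is integrable enough (or, better, admits the expansion $O^{(m)}(r^{-\theta})$ with $\theta$ small enough that dyadic rescaling gives $\partial^k u = O(r^{1-k})$, $k \le 4$). That cancellation analysis, which is precisely where the restriction $n \le 7$ enters, is the content of the expansions in \cite{GM,LP} the paper cites. Without it, neither the starting bound $u \in L^1_{\mathrm{loc}}$ (beyond the crude observation that the leading $r^{4-n}$ singularities match) nor the claimed $f \in L^q$, $q > n/4$, is established. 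As written, the proposal outlines a plausible alternative strategy but contains a concrete error in its first case and leaves the decisive estimate in its second case unproved.
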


\begin{proof}
First we observe that if $\rho $ is a positive smooth function on $M$, $%
\widetilde{g}=\rho ^{\frac{4}{n-4}}g$, then%
\begin{equation}
c_{n}G_{\widetilde{P},p}-G_{\widetilde{L},p}^{\frac{n-4}{n-2}}=\rho \left(
p\right) ^{-1}\rho ^{-1}\left( c_{n}G_{P,p}-G_{L,p}^{\frac{n-4}{n-2}}\right)
.  \label{eq6.2}
\end{equation}%
Hence we only need to verify (\ref{eq6.1}) for a conformal metric.

For the case $M$ is conformally flat near $p$, by a conformal change of
metric, we can assume $g$ is Euclidean near $p$. Then under the normal
coordinate at $p$ we have%
\begin{equation}
G_{P,p}=\frac{1}{2n\left( n-2\right) \left( n-4\right) \omega _{n}}\left(
r^{4-n}+A+O^{\left( 4\right) }\left( r\right) \right) .  \label{eq6.3}
\end{equation}%
Here $\omega _{n}$ is the volume of unit ball in $\mathbb{R}^{n}$ and $A$ is
a constant. People usually call $A$ as the mass of Paneitz operator. The
Green's function of conformal Laplacian%
\begin{equation}
G_{L,p}=\frac{1}{4n\left( n-1\right) \omega _{n}}\left( r^{2-n}+O^{\left(
4\right) }\left( r^{-1}\right) \right) .  \label{eq6.4}
\end{equation}%
It is worth pointing out one has better estimate for the Green's function
than the one in (\ref{eq6.3}) and (\ref{eq6.4}), but the formula we wrote
above also works for $n=5,6,7$ without locally conformally flat assumption.
More precisely, for $n=5,6,7$, under the conformal normal coordinate, (\ref%
{eq6.3}) and (\ref{eq6.4}) remain true (see \cite{GM,LP}). It follows that%
\begin{equation}
c_{n}G_{P,p}-G_{L,p}^{\frac{n-4}{n-2}}=\left( 4n\left( n-1\right) \omega
_{n}\right) ^{-\frac{n-4}{n-2}}A+O^{\left( 4\right) }\left( r\right) .
\label{eq6.5}
\end{equation}
\end{proof}

To continue, we note that under the assumption of Lemma \ref{lem6.1}, by (%
\ref{eq2.1}) we have%
\begin{equation}
P\left( c_{n}G_{P,p}-G_{L,p}^{\frac{n-4}{n-2}}\right) =\frac{n-4}{\left(
n-2\right) ^{2}}G_{L,p}^{\frac{n-4}{n-2}}\left\vert Rc_{G_{L,p}^{\frac{4}{n-2%
}}g}\right\vert _{g}^{2},  \label{eq6.6}
\end{equation}%
hence%
\begin{equation}
G_{L,p}^{\frac{n-4}{n-2}}\left\vert Rc_{G_{L,p}^{\frac{4}{n-2}}g}\right\vert
_{g}^{2}=O\left( r^{-3}\right)  \label{eq6.7}
\end{equation}%
and%
\begin{equation}
\left( 4n\left( n-1\right) \omega _{n}\right) ^{-\frac{n-4}{n-2}}A=\frac{n-4%
}{\left( n-2\right) ^{2}}\int_{M}G_{P,p}G_{L,p}^{\frac{n-4}{n-2}}\left\vert
Rc_{G_{L,p}^{\frac{4}{n-2}}g}\right\vert _{g}^{2}d\mu .  \label{eq6.8}
\end{equation}

If in addition we know the Green's function $G_{P,p}>0$, then it follows
from (\ref{eq6.8}) that $A\geq 0$, moreover $A=0$ if and only if $\left(
M,g\right) $ is conformal equivalent to the standard $S^{n}$. This proves
the positive mass theorem for Paneitz operator.

\end{document}